\numberwithin{equation}{section}
\theoremstyle{plain}
  \newtheorem{thm}{Theorem}[section]
  \newtheorem{lem}[thm]{Lemma}
  \newtheorem{prop}[thm]{Proposition}
  \newtheorem*{thm*}{Theorem}  
  \newtheorem*{thmm*}{Main Theorem} 
\theoremstyle{definition}
  \newtheorem{defn}[thm]{Definition}
  \newtheorem*{ack*}{Acknowledgement}
  \newtheorem*{ref*}{Reference}
  \newtheorem*{ex*}{Example}
  \newtheorem*{ft*}{Fact}
\theoremstyle{plain}
\newcommand\pl{\partial}
\newcommand\iip[2]{\left\langle{#1},{#2}\right\rangle}
\newcommand\af{\alpha}
\newcommand\bt{\beta}
\newcommand\gm{\gamma}
\newcommand\Om{\Omega}
\newcommand\sm{\sigma}
\newcommand\dt{\delta}
\newcommand\Dt{\Delta}
\newcommand\vep{\varepsilon}
\newcommand\vph{\varphi}
\newcommand\BR{\mathbb{R}}
\newcommand\BZ{\mathbb{Z}}
\newcommand\BN{\mathbb{N}}
\newcommand\BS{\mathbb{S}}
\newcommand\bv{\mathbf{v}}
\newcommand\bw{\mathbf{w}}
\newcommand\bB{\mathbf{B}}
\newcommand\bY{\mathbf{Y}}
\newcommand\bn{\mathbf{n}}
\newcommand\bM{\mathbf{M}}
\newcommand\CA{\mathcal{A}}
\newcommand\CC{\mathcal{C}}
\newcommand\CH{\mathcal{H}}
\newcommand\SA{\mathscr{A}}
\newcommand\SB{\mathscr{B}}
\newcommand\SL{\mathscr{L}}
\DeclareMathOperator{\tr}{tr}
\DeclareMathOperator{\Int}{int}
\DeclareMathOperator{\spn}{span}
\title{Perturbations of a minimal surface with triple junctions in $\BR^2 \times \BS^1$}
\author{Chen-Kuan Lee}
\address{Department of Mathematics\\University of Notre Dame\\Notre Dame, IN 46556\\USA}
\email{clee36@nd.edu}
\begin{document}
\begin{sloppypar}

\begin{abstract}
We construct stationary perturbations of a specific minimal surface with a circle of triple junctions in $\BR^2 \times \BS^1$, that satisfy given boundary data.
\end{abstract}

\maketitle


\section{\textbf{Introduction}}

Surfaces with triple junctions arise naturally in the study of minimal surfaces and soap bubble clusters. Taylor \cite{Tay76} showed that there are only two possible singularities of certain almost-area-minimizing surfaces ($(\mathbf{M}, \epsilon, \delta)$-minimal sets introduced by Almgren \cite{Alm76}) in $\BR^3$. One of them is the triple junction, which consists of three surfaces that meet at $120^\circ$, and the other is a tetrehedral-type junction, which consists of four triple junctions meeting at a point.  Later, Lawlor and Morgan \cite{LM96} proved that submanifolds with triple junctions are always locally area-minimizing in arbitrary dimension and codimension.

In some ways, triple junctions are one of the simplest and nicest singular models. They are topologically stable, in the sense that minimal surfaces near triple junctions will also have triple junctions \cite{Sim93}, and they are also very regular: minimal surfaces with triple junctions are analytic up through their singular curve \cite{KNS78}, \cite{Kru14}.

In this paper, we construct examples of minimal surfaces with a circle of triple junctions modeled on $\bY \times \BS^1$ in $\BR^2 \times \BS^1$. Given any perturbation of the boundary circles $(\bY \times \BS^1) \cap \partial \bB$, where $\bB = B_1^2(0) \times \BS^1$ is the solid torus in $\BR^2 \times \BS^1$, we shall find perturbations of $(\bY \times \BS^1) \cap \bB$ which are stationary and have the prescribed boundary. We describe our precise setup and the main theorem below.

\subsection{Notation and the main result}

Consider the Euclidean space $(\BR^2, \iip{\cdot}{\cdot})$ and identify $\BS^1$ with $ \BR / \BZ$ inheriting the Riemannian metric from the Euclidean one. Then $\BR^2 \times \BS^1$ with the product metric is also a complete Riemannian manifold. Here we slightly abuse the notation: still denote the product metric by $\iip{\cdot}{\cdot}$. Let $\bY \subset \BR^2$ be the cone consisting of three rays that meet at $120^\circ$: \begin{equation}\label{eqn:Y-def}
    \bY = \{(x, 0) \,\big|\, x \geq 0\} \cup \{(-x, \sqrt{3} x) \,\big|\, x \geq 0\} \cup \{(-x, - \sqrt{3} x) \,\big|\, x \geq 0\}.
\end{equation} We note that $\bY$ is a stationary rectifiable varifold in $\BR^2$, and hence $\bY \times \BS^1$, which contains a circle of triple junctions, is a $2$-dimensional stationary cone in $\BR^2 \times \BS^1$. In particular, $(\bY \times \BS^1) \cap \bB$ is stationary in $\bB$.

Throughout the paper, we often consider a triple of functions $\underline{u} \coloneqq (u_1, u_2, u_3): \Om \rightarrow \BR$, where $\Om$ is a manifold, possibly with boundary. Define $|\underline{u}| \coloneqq \sum_{i = 1}^3 |u_i|$ and let $\underline{\CC}^{k, \af}(\Om)$ be the triple of $\CC^{k, \af}(\Om)$ equipped with the norm $\|\underline{u}\|_{k, \af} \coloneqq \sum_{i = 1}^3 \|u_i\|_{k, \af}$, where $\|u_i\|_{k, \af}$ is the usual H\"{o}lder norm of $u_i \in \CC^{k, \af}(\Om)$. Note that when $\Om$ is compact, both $\CC^{k, \af}(\Om)$ and $\underline{\CC}^{k, \af}(\Om)$ are Banach spaces.

Our main result is the following theorem, which states that there are $\CC^{2, \af}$-perturbations of $(\bY \times \BS^1) \cap \bB$ that are stationary and have the prescribed boundary data.

\begin{thm} \label{thm_main}
    Given any $\dt \in (0, \frac{1}{2})$ and $\af \in (0, 1)$, there exists $\vep > 0$ satisfying the following: for all $\underline{\vph} \in \underline{\CC}^{2, \af}(\BS^1)$ with $\|\underline{\vph}\|_{2, \af} < \vep$, there exists $\underline{u} \in \underline{\CC}^{2, \af}([0, 1] \times \BS^1)$ such that \begin{enumerate}
        \item $\underline{u}\big|_{\{1\} \times \BS^1} = \underline{\vph}$.
        \item There is a stationary $\CC^{2, \af}$-perturbation of $(\bY \times \BS^1) \cap \bB$ associated with $\underline{u}$.
    \end{enumerate}
\end{thm}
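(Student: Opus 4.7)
The plan is to recast the theorem as a nonlinear operator equation and solve it via the implicit function theorem, with the bulk of the work going into the linear theory of a coupled elliptic boundary value problem at the triple junction. First I would parametrize perturbations: the cone $\bY \times \BS^1$ is the union of three half-cylindrical sheets $P_1, P_2, P_3$ meeting along the singular circle $\{0\} \times \BS^1$, each canonically identified with $[0, 1] \times \BS^1$ and carrying a unit normal $\bn_i^\perp \in \BR^2 \times \{0\}$, with $\bn_1^\perp + \bn_2^\perp + \bn_3^\perp = 0$. The perturbation associated with $\underline{u}$ is then the union of the normal graphs $P_i + u_i\, \bn_i^\perp$, and by the above relation these glue into a surface with a single triple-junction circle precisely when the compatibility $u_1(0, z) + u_2(0, z) + u_3(0, z) = 0$ holds.

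Next I would express stationarity of the perturbed surface as a nonlinear equation $F(\underline{u}) = (0, 0, 0, \underline{\vph})$ whose components are: the three minimal surface operators $\mathcal{M}(u_i)$ on the interior of each sheet; the compatibility expression at $r = 0$; the $120^\circ$ balance law at $r = 0$ (the sum of the three outward unit conormals along the perturbed junction must vanish, linearizing to $\pl_r u_1 = \pl_r u_2 = \pl_r u_3$); and the outer trace $\underline{u}|_{\{1\} \times \BS^1}$. Standard expansions of the area functional in graph coordinates show that $F$ is a smooth map between appropriate (possibly $\dt$-weighted) H\"older spaces near the origin, with $F(0) = 0$.

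The heart of the argument is the invertibility of the linearization $L = DF(0)$. On the interior, $L$ decomposes into three Euclidean Laplacians on $[0, 1] \times \BS^1$; at $r = 0$ it gives the pair of linearized junction conditions above; at $r = 1$ it gives the Dirichlet trace. Fourier decomposition along $\BS^1$ decouples $L$ into a family of finite-dimensional systems, with general solution $a_i + b_i r$ for the zero mode and $A_i e^{|k|r} + B_i e^{-|k|r}$ for the $k$-th mode. Checking that the six scalar boundary conditions determine the six unknowns per mode uniquely, with estimates that reassemble into a bounded inverse in the chosen H\"older norm, would establish that $L$ is a Banach space isomorphism. The parameter $\dt \in (0, \tfrac{1}{2})$ enters here to select a weight strictly between the indicial exponents of the junction operator, so that the mode-wise bounds combine uniformly in $k$. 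Once $L$ is invertible, the implicit function theorem produces the desired $\underline{u}$ for all $\underline{\vph}$ of sufficiently small $\underline{\CC}^{2, \af}$-norm.

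The main obstacle I foresee is the third step: the compatibility and balance laws at $r = 0$ couple the three sheets in a way not addressed by standard elliptic theory applied component-by-component. Identifying the correct function-space framework, and the precise role of $\dt$, so that the junction operator has a bounded inverse is the core technical point; after that, the implicit function theorem step itself is routine.
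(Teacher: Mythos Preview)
Your overall strategy---reduce stationarity to a quasilinear system, invert the linearization, and close by a fixed-point argument---matches the paper's. But two points in your setup are not right.

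\textbf{The parametrization.} Pure normal graphs $P_i + u_i\,\bn_i^\perp$ do \emph{not} glue along a common curve under the single condition $u_1(0,z)+u_2(0,z)+u_3(0,z)=0$. At $r=0$ the boundary of sheet $i$ sits at $u_i(0,z)\,\bn_i^\perp$, and for three such points in $\BR^2$ to coincide one needs $u_1\bn_1^\perp=u_2\bn_2^\perp=u_3\bn_3^\perp$, which forces $u_1=u_2=u_3=0$. Thus your parametrization pins the spine, and the system you write down (Dirichlet at $r=1$, compatibility plus balance at $r=0$) does not describe a glued surface unless you also impose $u_i(0,\cdot)=0$, at which point it is overdetermined. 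The paper repairs this by adding a tangential correction $\eta(x)\bw_i(y)$ to each sheet, with $\bw_i$ determined by the $u_j(0,\cdot)$; this lets the spine move to an arbitrary curve $(\bv(y),y)$, and the single scalar condition $\sum_i u_i(0,\cdot)=0$ is then exactly what is needed for the three sheets to meet there. Only after this correction do the equations take the form $\Dt\underline u=\underline F(\underline u)$ with boundary operator $\SB\underline u=(u_1+u_2+u_3,\ \pl_\bn u_2-\pl_\bn u_3,\ \pl_\bn u_1-\tfrac12(\pl_\bn u_2+\pl_\bn u_3))$ at $r=0$.

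\textbf{The role of $\dt$.} There are no indicial roots or weighted H\"older spaces here: the domain $[0,1]\times\BS^1$ is compact with smooth boundary, and the junction conditions are ordinary first-order boundary conditions, so standard $\CC^{k,\af}$ spaces suffice. In the paper $\dt$ is purely geometric: it is the scale of the cutoff $\eta$ used in the tangential correction above, and enters only through the smallness constraint $\|\underline u\|_{2,\af}<\dt/10$ that guarantees the perturbed sheets are embedded and meet only along the spine. It plays no role in the linear estimate.

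For the linear step, you can avoid the coupled $6\times 6$ mode-by-mode computation: the substitution $v_1=u_1+u_2+u_3$, $v_2=u_2-u_3$, $v_3=u_1-\tfrac12(u_2+u_3)$ decouples the system into one Dirichlet problem (for $v_1$) and two mixed Dirichlet--Neumann problems (for $v_2,v_3$) on $[0,1]\times\BS^1$, each handled by classical Schauder theory. With the linearization inverted, the paper closes by contraction mapping rather than the implicit function theorem, but that difference is cosmetic.
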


Mese and Yamada \cite{MY06} used energy methods to find area-minimizing surfaces in Euclidean space with triple junctions bounding wire frames which are deformations of $(\bY \times \BR) \cap \partial B_1^3$, provided that they satisfy a certain non-degeneracy condition. Since we consider perturbations of $(\bY \times \BS^1) \cap \bB$, we avoid any degeneracy issues with the interior singular set interacting with the boundary, or with singularities in the boundary wire frame itself. 

We take a non-parametric approach in this paper, by explicitly describing a minimal surface with triple junctions as a kind of graph over the model case $\bY \times \BS^1$, and expressing the corresponding minimal surface equations as a perturbation of a linear system. We solve this system with (small) prescribed boundary data using a contraction mapping argument, and obtain quantitative estimates for our surface. We note that since the triple junction behaves like a free boundary in the surface, we cannot prescribe both the singular set and the boundary curves---instead, the singular curve is obtained as part of our solution.

Let us also remark that, even in general, there are relatively few examples of minimal surfaces with non-trivial singular sets, with most constructions being fairly recent; see e.g. \cite{Sim23, Liu21, Liu23, Liu24}.

The organization of this paper is as follows. In Section 2, we elaborate on the definition of $\CC^{2, \af}$-perturbations and give a sufficient condition to be a stationary $\CC^{2, \af}$-perturbation. This leads to dealing with a system of quasilinear PDEs. Section 3 contains the solvability, uniqueness, and a Schauder estimate to the linearization of the PDE system. Finally, we use the contraction mapping to solve the original PDE system and prove the main theorem in Section 4.

\begin{ack*} The author is really grateful to Prof. Nick Edelen for his constant support and many inspiring comments. He would also like to thank Jui-Yun Hung for several valuable discussions.
\end{ack*}

\section{\textbf{Parametrization of stationary varifolds as \texorpdfstring{$\CC^{2, \af}$}{TEXT}-perturbations of \texorpdfstring{$(\bY \times \BS^1) \cap \bB$}{TEXT}}}

Let \begin{equation} \label{conormal}
    \bn_1 = (- 1, 0), \quad \bn_2 = (\frac{1}{2}, - \frac{\sqrt{3}}{2}), \quad \bn_3 = (\frac{1}{2}, \frac{\sqrt{3}}{2})
\end{equation} be three unit vectors in $\BR^2$. Rotating $\bn_i$ counterclockwise by $90^\circ$ to get $\nu_i$. That is, \begin{equation} \label{normal}
    \nu_1 = (0, -1), \quad \nu_2 = (\frac{\sqrt{3}}{2}, \frac{1}{2}), \quad \nu_3 = (-\frac{\sqrt{3}}{2}, \frac{1}{2}).
\end{equation} For each $i = 1, 2, 3$, we define \begin{equation}
    \Om_i = \left\{ (-x \, \bn_i, y) \, \big| \, 0 \leq x < 1, y \in \BS^1 \right\} \subset \bB.
\end{equation} We then decompose $(\bY \times \BS^1) \cap \bB$ as the union of $\Om_1$, $\Om_2$ and $\Om_3$. $$\bigcap_{i = 1}^3 \Om_i = \left\{ ((0 , 0), y) \, \big| \, y \in \BS^1 \right\}$$ is called the spine of $(\bY \times \BS^1) \cap \bB$, which is an $\BS^1$ of triple junctions. After identifying the tangent space of $\BR^2 \times \BS^1$ with $\BR^3$, we note that $(0, 0, 1)$ is the unit tangent of the spine, $(\bn_i, 0)$ is the unit outer normal of the spine in $\Om_i$, and $(\nu_i, 0)$ is a unit normal of $\Om_i$ in $\bB$. Moreover, $\Om_i$ can be identified with $[0, 1) \times \BS^1$ in such a way that $\pl_{(\bn_i, 0)} = - \pl_x = \pl_\bn$, where $\bn$ is the unit outer normal of $\{0\} \times \BS^1$ in $[0, 1) \times \BS^1 \subset \BR \times \BS^1$.

Now, we want to perturb $(\bY \times \BS^1) \cap \bB$ along the normals. For fixed $\dt \in (0, \frac{1}{2})$, let $\eta: [0, 1] \rightarrow [0, 1]$ be a standard smooth cutoff function that is $1$ on $[0, \dt]$, $0$ on $[2 \dt, 1]$, and $|\eta'| \leq \frac{2}{\dt}$. Given $\underline{u} = (u_1, u_2, u_3): [0, 1) \times \BS^1 \rightarrow \BR$, we define $\bw_i: \BS^1 \rightarrow \BR^2$ by \begin{equation} \label{bw}
    \bw_i(y) = \frac{u_{i-1}(0, y) - u_{i + 1}(0, y)}{\sqrt{3}} \bn_i
\end{equation} and parametrize \begin{equation}
    \bM = \bigcup_{i = 1}^3 \bM_i \coloneqq \bigcup_{i = 1}^3 \left\{ (- x\, \bn_i + u_i(x, y) \nu_i + \eta(x) \bw_i(y), y) \, \big| \, (x, y) \in [0, 1) \times \BS^1 \right\}.
\end{equation}

In this paper, by $\CC^{2, \af}$-perturbation we mean the following.

\begin{defn}
$\bM$ is called a $\CC^{2, \af}$-perturbation of $(\bY \times \BS^1) \cap \bB$ associated with $\underline{u}$ if \begin{enumerate}
    \item $\underline{u} \in \underline{\CC}^{2, \af}([0, 1] \times \BS^1)$ with $\|\underline{u}\|_{2, \af} = \sum_{i = 1}^3 \|u_i\|_{2, \af} < \frac{\dt}{10}$.
    \item Each $\bM_i$ has no self-intersection.
    \item \label{C^0_condition} ($\CC^0$-compatibility) For each $i \neq j$, $\bM_i$ only meets $\bM_j$ on the spine $\bigcap_{l = 1}^3 \bM_l$ of $\bM$. Moreover, the spine is still a circle of triple junctions.
\end{enumerate}
\end{defn}

\begin{lem} \label{lem_C^0}
    If $\underline{u} \in \underline{\CC}^{2, \af}([0, 1] \times \BS^1)$ satisfies $\|\underline{u}\|_{2, \af} < \frac{\dt}{10}$ and $\sum_{i = 1}^3 u_i(0, y) = 0$ for all $y \in \BS^1$, then there exists $\bv: \BS^1 \rightarrow \BR^2$ such that $$u_i(0, y) = \iip{\bv(y)}{\nu_i}, \quad \bw_i(y) = \iip{\bv(y)}{\bn_i} \bn_i$$ for all $i = 1, 2, 3$ and $y \in \BS^1$. Moreover, $\bM$ is a $\CC^{2, \af}$-perturbation of $(\bY \times \BS^1) \cap \bB$ associated with $\underline{u}$. Its spine is characterized by $$\bigcap_{i = 1}^3 \bM_i = \{(\bv(y), y) \, \big| \, y \in \BS^1 \}.$$
\end{lem}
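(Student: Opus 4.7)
I would handle the three properties of a $\CC^{2,\af}$-perturbation in turn, with $\CC^0$-compatibility at the end being the heart of the argument.

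The existence of $\bv$ is essentially linear algebra driven by the $120^\circ$ symmetry of the frames $\{\nu_i\}$ and $\{\bn_i\}$. Since $\nu_1+\nu_2+\nu_3 = 0$ and any two of the $\nu_i$ span $\BR^2$, the natural guess is $\bv(y) := \tfrac{2}{3}\sum_{i=1}^3 u_i(0,y)\,\nu_i$. Using $\langle \nu_i, \nu_j\rangle = -\tfrac{1}{2}$ for $i \neq j$ together with the hypothesis $\sum_i u_i(0,y) = 0$, one immediately verifies $\langle \bv(y), \nu_j\rangle = u_j(0,y)$. Expanding $\langle \bv(y), \bn_j\rangle$ with the same formula and the elementary identity $\nu_{j-1} - \nu_{j+1} = \sqrt{3}\,\bn_j$ gives $\langle \bv(y), \bn_j\rangle\,\bn_j = \bw_j(y)$. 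Regularity of $\bv$ is inherited from $\underline{u}\big|_{\{0\}\times\BS^1}$.

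For the injectivity of each sheet, write the $\BR^2$-part of the parametrization as $p_i(x,y) := -x\bn_i + u_i(x,y)\nu_i + \eta(x)\bw_i(y)$ and recall $\bw_i = \ld_i\bn_i$ where $\ld_i(y) := (u_{i-1}(0,y) - u_{i+1}(0,y))/\sqrt{3}$. The $\BS^1$-factor forces $y_1 = y_2$ at once, and projecting the remaining $\BR^2$-equation onto the orthonormal frame $\{\bn_i, \nu_i\}$ reduces matters to monotonicity of $f(x) := -x + \eta(x)\ld_i(y)$: the bounds $|\ld_i| \leq \dt/(5\sqrt{3})$ (from $\|\underline{u}\|_0 < \dt/10$) and $|\eta'| \leq 2/\dt$ give $|f'| \geq 1 - 2/(5\sqrt{3}) > 0$, hence $f$ is strictly decreasing and $x_1 = x_2$. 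Spine identification is then a one-line computation: at $x = 0$, the decomposition $\bv(y) = u_i(0,y)\nu_i + \bw_i(y)$ from the first step yields $p_i(0,y) = \bv(y)$ for every $i$, and since $\bv$ is continuous this defines a topological circle on which all three sheets meet.

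The main obstacle is $\CC^0$-compatibility, which demands a quantitative geometric argument. Set $q_i(x,y) := p_i(x,y) - \bv(y) = -x\bn_i + (u_i(x,y) - u_i(0,y))\nu_i + (\eta(x)-1)\bw_i(y)$. I would show that for every $x \in (0,1)$ the vector $q_i(x,y)$ lies strictly inside an angular sector of opening well below $60^\circ$ about the direction $-\bn_i$. Splitting into $x \in (0,\dt]$ (where $\eta \equiv 1$, so $\langle q_i, \bn_i\rangle = -x$ while $|\langle q_i, \nu_i\rangle| \leq x\|\underline{u}\|_{\CC^1} \leq x\dt/10$) and $x \in [\dt, 1)$ (where $|\langle q_i, \bn_i\rangle| \geq 4\dt/5$ and $|\langle q_i, \nu_i\rangle| \leq 2\|\underline{u}\|_0 \leq \dt/5$) yields the uniform angular bound in both regimes. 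Since $-\bn_1, -\bn_2, -\bn_3$ are mutually at $120^\circ$, the three bounded-angle sectors about them are pairwise disjoint, so $q_i(x_i,y) = q_j(x_j,y)$ with $i \neq j$ forces both vectors to vanish, i.e., $x_i = x_j = 0$, and the common intersection point is $\bv(y)$. What makes this step delicate is that the smallness $\|\underline{u}\|_{2,\af} < \dt/10$ must be played off against the cutoff gradient $|\eta'| \leq 2/\dt$ so that the two bounds combine into a closable angular estimate; a sloppier choice of constants would fail precisely here.
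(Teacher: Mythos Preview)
Your proposal is correct and, for the $\CC^0$-compatibility step, takes a genuinely different route from the paper. For the existence of $\bv$ you write down the closed formula $\bv(y)=\tfrac{2}{3}\sum_i u_i(0,y)\nu_i$ and verify both identities directly; the paper instead sets $\bv_i:=\bw_i+u_i(0,y)\nu_i$ and checks $\bv_i=\bv_{i-1}$ by changing basis. These are equivalent, yours being slightly more economical. The no-self-intersection argument (monotonicity of $x\mapsto -x+\eta(x)\lambda_i(y)$) is the same in both. The real divergence is in $\CC^0$-compatibility: the paper rewrites $p_i(x,y)=p_{i-1}(z,y)$ as $(\eta(x)-\eta(z))\bv(y)=(\text{terms in }\bn_i,\nu_i)$, uses $\|\bv\|_0<\dt/5$ to force $x,z\in[0,\dt]$, and then exploits $\eta\equiv1$ there to conclude $x=z=0$ via the mean value theorem. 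Your argument subtracts $\bv(y)$ once and for all, observes that each $q_i(x,y)$ with $x>0$ lies in a narrow open angular sector about $-\bn_i$ (half-angle at most $\arctan(1/4)$ after your two-regime case split), and concludes from the pairwise disjointness of three such sectors centred $120^\circ$ apart. Your route is more geometric and treats all pairs $(i,j)$ symmetrically at once, while the paper's two-step localisation is more hands-on but makes the role of the cutoff region $[\dt,2\dt]$ explicit. Either way the numerology $\|\underline u\|_{2,\af}<\dt/10$ versus $|\eta'|\le 2/\dt$ closes with room to spare, as you note.
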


\begin{proof}
    We first claim that the existence of $\bv$ is equivalent to $\sum_{i = 1}^3 u_i(0, y) = 0$ for all $y \in \BS^1$. If such $\bv$ exists, then $$\sum_{i = 1}^3 u_{i}(0, y) = \sum_{i = 1}^3 \iip{\bv(y)}{\nu_i} = \iip{\bv(y)}{\sum_{i = 1}^3 \nu_i} = 0.$$ Conversely, we suppose that $\sum_{i = 1}^3 u_i(0, y) = 0$. For all $i = 1, 2, 3$ and $y \in \BS^1$, we define $$\bv_i = \bw_i + u_i(0 ,y) \nu_i.$$ It suffices to show that $\bv_i = \bv_j$ for all $i, j$. According to \eqref{conormal} and \eqref{normal}, we note that $\iip{\bn_i}{\bn_{i-1}} = \iip{\bn_i}{\bn_{i+1}} = \iip{\nu_i}{\nu_{i-1}} = \iip{\nu_i}{\nu_{i+1}} = -\frac{1}{2}$ and $\iip{\bn_i}{\nu_{i-1}} = - \iip{\bn_i}{\nu_{i+1}} = \frac{\sqrt{3}}{2}$. Then $$\begin{aligned}
        \bv_i &= \frac{u_{i - 1}(0, y) - u_{i + 1}(0, y)}{\sqrt{3}} \bn_i + u_i(0, y) \nu_i \\
        &= \frac{u_{i - 1}(0, y) - u_{i + 1}(0, y)}{\sqrt{3}} (-\frac{1}{2} \bn_{i-1} + \frac{\sqrt{3}}{2} \nu_{i-1}) + u_i(0, y) (-\frac{\sqrt{3}}{2} \bn_{i-1} - \frac{1}{2} \nu_{i-1}) \\
        &= \frac{u_{i+1}(0, y) - u_{i-1}(0, y) - 3 u_i(0, y)}{2 \sqrt{3}} \bn_{i-1} + \frac{-u_{i + 1}(0, y) + u_{i-1}(0 ,y) - u_i(0, y)}{2} \nu_{i - 1} \\
        &= \frac{u_{i+1}(0, y) - u_i(0, y)}{\sqrt{3}} \bn_{i-1} + u_{i-1}(0 ,y) \nu_{i - 1} \\
        &= \bv_{i-1}.
    \end{aligned}$$ This holds for all $i$, which finishes the proof of the claim.
    
    Next, we claim that if $\|\underline{u}\|_{2, \af} < \frac{\dt}{10}$, then $\bM_i$ can not self-intersect. Note that $\|u_i\|_{0} \leq \|\underline{u}\|_{2, \af} < \frac{\dt}{10}$ for all $i = 1, 2, 3$. Then by the choice of $\eta$, we have $$ - 1 + \eta'(x) \frac{u_{i-1}(0, y)- u_{i+1}(0, y)}{\sqrt{3}} < -1 + \frac{2}{\dt} \cdot \frac{\|u_{i-1}\|_{0} + \|u_{i+1}\|_{0}}{\sqrt{3}} < 0,$$ which implies that $-x + \eta(x)\frac{u_{i-1}(0, y)- u_{i+1}(0, y)}{\sqrt{3}}$ is strictly decreasing in $x$ for any fixed $y \in \BS^1$. Therefore, $$\begin{aligned}
        \bM_i &= \{ (- x \, \bn_i + u_i(x, y) \nu_i + \eta(x) \bw_i(y), y) \, \big| \, (x, y) \in [0, 1] \times \BS^1 \} \\
        &= \{ ((- x + \eta(x)\frac{u_{i-1}(0, y)- u_{i+1}(0, y)}{\sqrt{3}}) \bn_i + u_i(x, y) \nu_i, y) \, \big| \, (x, y) \in [0, 1] \times \BS^1 \}
    \end{aligned}$$ can not self-intersect.

    According to the first claim, it is easy to see that $\{(\bv(y), y) \, \big| \, y \in \BS^1\} \subset \bigcap_{i = 1}^3 \bM_i$. We now claim that for each $i \neq j$, $\bM_i$ only meets $\bM_j$ at $\{(\bv(y), y) \, \big| \, y \in \BS^1\}$. This also implies $\bigcap_{i = 1}^3 \bM_i = \{(\bv(y), y) \, \big| \, y \in \BS^1\}$ and finishes the proof. It suffices to show that for any fixed $i = 1, 2, 3$ and $y \in \BS^1$, if \begin{equation} \label{eq1}
        - x\, \bn_i + u_i(x, y) \nu_i + \eta(x) \bw_i(y) = - z\, \bn_{i-1} + u_{i-1}(z, y) \nu_{i-1} + \eta(z) \bw_{i-1}(y),
    \end{equation} then $x = z = 0$.

    Writing $\bn_{i - 1}, \nu_{i -1}$ as linear combinations of $\bn_i$ and $\nu_i$, \eqref{eq1} can be rewritten as \begin{equation} \label{eq2} \begin{aligned}
        (\eta(x) - \eta(z)) \bv(y) &= (x + \frac{z}{2} + \frac{\sqrt{3}}{2}( u_{i-1}(z, y) - \eta(z) u_{i-1}(0, y))) \bn_i \\
        &\quad + (\frac{\sqrt{3} z}{2} -(u_i(x, y) - \eta(x) u_i(0, y)) - \frac{1}{2}( u_{i-1}(z, y) - \eta(z) u_{i-1}(0, y))) \nu_i.
    \end{aligned}
    \end{equation} Note that $\|\underline{u}\|_{2, \af} < \frac{\dt}{10}$ leads to \begin{equation} \label{sup_v}
        \| \bv \|_0 = \sup_{y \in \BS^1} \sqrt{\left| \bw_i(y) \right|^2 + |u_i(0, y)|^2} \leq \sqrt{(\frac{\|u_{i-1}\|_0 + \|u_{i+1}\|_0}{\sqrt{3}})^2 + \|u_{i}\|_0^2} < \frac{\dt}{5}.
    \end{equation} Thus, the left hand side of \eqref{eq2} has length at most $\frac{\dt}{5}$. On the other hand, the right hand side of \eqref{eq2} has length at least $$|x + \frac{z}{2} + \frac{\sqrt{3}}{2}( u_{i-1}(z, y) - \eta(z) u_{i-1}(0, y))| \geq x + \frac{z}{2} - \frac{\sqrt{3} \dt}{10}.$$ Therefore, \eqref{eq2} holds only when $x + \frac{z}{2} - \frac{\sqrt{3} \dt}{20} < \frac{\dt}{5}$. In other words, $x, z \in [0, \dt]$. It follows that $\eta(x) = \eta(z) = 1$ and \eqref{eq2} becomes $$\begin{aligned}
        0 &= (x + \frac{z}{2} + \frac{\sqrt{3}}{2}( u_{i-1}(z, y) - u_{i-1}(0, y)) \bn_i \\
        &\quad+ (\frac{\sqrt{3} z}{2} -(u_i(x, y) -u_i(0, y)) - \frac{1}{2}( u_{i-1}(z, y) - u_{i-1}(0, y)) \nu_i.
    \end{aligned}$$ Now, by the mean value theorem and $\|u_{i-1}\|_{1} \leq \|\underline{u}\|_{2, \af} < \frac{\dt}{10}$, we have $$|u_{i-1}(z, y) - u_{i-1}(0, y)| \leq \frac{\dt}{10} z.$$ Then $$0 = |x + \frac{z}{2} + \frac{\sqrt{3}}{2}( u_{i-1}(z, y) - u_{i-1}(0, y))| \geq |x + \frac{z}{2} - \frac{\dt}{5} z|$$ leads to $x = z = 0$.
\end{proof}

To be a stationary varifold, $\bM$ must also satisfy the following two conditions: \begin{enumerate}
    \item \label{minimal} (Minimal condition) Each $\bM_i$, as a $\CC^2$ submanifold with boundary, is minimal in $\BR^2 \times \BS^1$. That is, the mean curvature vector $H_{\bM_i}$ in $\Int \bM_i$ is zero.
    \item \label{C^1_condition} ($\CC^1$-compatibility) The unit conormals $\xi_i \in \CC(\bigcap_{j = 1}^3 \bM_j, T(\BR^2 \times \BS^1)\big|_{\bigcap_{j = 1}^3 \bM_j}) \cong \CC(\BS^1, \BR^3)$ of the spine $\bigcap_{j = 1}^3 \bM_j$ in $\bM_i, i = 1, 2, 3$, should meet at $120^\circ$. In other words, \begin{equation}
        \sum_{i = 1}^3 \xi_i(y) = 0
    \end{equation} for all $y \in \BS^1$.
\end{enumerate} These two conditions follow from the first variation of the area functional $\CA$ and the divergence theorem: for any $\CC^1$-vector field $X$ on $\BR^2 \times \BS^1$ that vanishes on the boundary of $\bM$, \begin{equation*}
    \begin{aligned}
        0 &= \dt \CA (X) = \int_{\bM} div_{\bM}(X) \, d\CH^2 = \sum_{i = 1}^3 \int_{\bM_i} div_{\bM_i}(X^T + X^\perp) \, d\CH^2 \\
        &= \sum_{i = 1}^3 \int_{\bigcap^{3}_{j = 1} \bM_j} \iip{\xi_i}{X} \, d\CH^1 - \sum_{i = 1}^3 \int_{\bM_i} \iip{H_{\bM_i}}{X} \, d\CH^2
    \end{aligned}
\end{equation*} holds, where $\CH^k$ denotes the $k$-dimensional Hausdorff measure, $X^T$ and $X^\perp$ denote the tangential and normal part of $X$ to $\bM_i$ respectively.

The conditions \eqref{minimal} and \eqref{C^1_condition} come down to a system of PDEs.

\begin{prop} \label{prop_evo}
    Let $\underline{u}$ be as in Lemma \ref{lem_C^0}. Then $\bM$ is stationary if and only if $\underline{u}$ satisfies \begin{equation} \label{pde}
        \Dt \underline{u} = \underline{F} \coloneqq (F_1, F_2, F_3) \text{ in } (0, 1) \times \BS^1, \quad \SB \underline{u} = (0, \underline{G}) \coloneqq (0, G_1, G_2) \text{ on } \{0\} \times \BS^1, 
    \end{equation} where $\SB$ is the boundary operator \begin{equation}
        \SB \underline{u} = (u_1 + u_2 + u_3, \pl_\bn u_2 - \pl_\bn u_3, \pl_\bn u_1 - \frac{1}{2}(\pl_\bn u_2 + \pl_\bn u_3)),
    \end{equation} and $F_i$, $G_i$ have the following structure: 
    \begin{equation} \label{str_F&G}
    \begin{aligned}
        F_i(x, y) =& \, F_{i,1}(x, \underline{u}\big|_{(0, y)}, D_y \underline{u}\big|_{(0, y)}, D u_i\big|_{(x, y)}) \cdot D^2_{yy} \underline{u}\big|_{(0, y)} \\
        &+ F_{i, 2}(x, \underline{u}\big|_{(0, y)}, D_y \underline{u}\big|_{(0, y)}, D u_i\big|_{(x, y)}) \cdot D^2 u_i\big|_{(x, y)} \\
        &+ F_{i, 3}(x, \underline{u}\big|_{(0, y)}, D_y \underline{u}\big|_{(0, y)}, D u_i\big|_{(x, y)}), \\
        G_i(y) =& \, G_i(D \underline{u}\big|_{(0, y)}),
    \end{aligned}
\end{equation} where $F_{i, j}$, $G_i$ are smooth functions satisfying the structural condition: there exists $C > 0$ such that \begin{equation} \begin{aligned} \label{str_F}
        |F_{i, 3}|(a, b, c, d) &\leq C (|b| + |c| + |d|)^2, \\
        (|F_{i, 1}| + |F_{i, 2}| + |D_b F_{i, 3}| + |D_c F_{i, 3}| + |D_d F_{i, 3}|)(a, b, c, d) &\leq C(|b|+ |c| + |d|), \\
        (|D_b F_{i, 1}| + |D_c F_{i, 1}| + |D_d F_{i, 1}|+|D_b F_{i, 2}| + |D_c F_{i, 2}| + |D_d F_{i, 2}|)(a, b, c, d) &\leq C, \\
    \end{aligned} 
    \end{equation} whenever $|b| + |c| + |d| \leq 1$; 
\begin{equation} \begin{aligned} \label{str_G}
        |G_i|(a) &\leq C |a|^2, \\
        |D_a G_i|(a) &\leq C |a|, \\
        |D^2_{aa} G_i|(a) &\leq C,
    \end{aligned} 
    \end{equation} whenever $|a| \leq 1$.
\end{prop}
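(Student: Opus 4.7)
The plan is to translate the two stationarity requirements---$H_{\bM_i} = 0$ in $\Int \bM_i$ and $\sum_{i=1}^{3}\xi_i = 0$ at the spine---directly into equations on $\underline{u}$, reading off the linear part to identify $\Dt$ and $\SB$ and absorbing the nonlinear remainders into $F_i, G_i$. The first component of $\SB \underline{u} = (0, G_1, G_2)$, namely $u_1 + u_2 + u_3 = 0$, is already built into the hypotheses via Lemma \ref{lem_C^0}; the real content is to derive the interior equation from minimality of each $\bM_i$, the remaining two boundary equations from $\CC^1$-compatibility, and to verify the structural bounds \eqref{str_F}--\eqref{str_G}.

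For the interior, I work in the orthonormal frame $(\bn_i, \nu_i, e_3)$ of $\BR^2 \times \BR$, where $e_3 = (0,1)$. Since $\bw_i(y) = w_i(y) \bn_i$ with $w_i(y) \coloneqq (u_{i-1}(0,y) - u_{i+1}(0,y))/\sqrt{3}$, the parametrization reads
\begin{equation*}
    \Phi_i(x,y) = \bigl(-x + \eta(x) w_i(y)\bigr) \bn_i + u_i(x,y) \nu_i + y \, e_3,
\end{equation*}
and the tangent vectors, induced metric $g_{ab}$, and unnormalized normal $N = \pl_x \Phi_i \times \pl_y \Phi_i$ are explicit polynomials in $u_i, D u_i, D^2 u_i, w_i, w_i', w_i''$. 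The $\nu_i$-component of $N$ equals $1 - \eta'(x) w_i(y)$, which stays bounded below by $\tfrac{1}{2}$ once $\|\underline{u}\|_{2,\af} < \dt/10$. Dividing the minimal surface equation $g^{ab} \iip{\pl_a\pl_b \Phi_i}{N} = 0$ by this factor and rearranging yields
\begin{equation*}
    \Dt u_i = (1-g^{xx}) \pl_x^2 u_i - 2 g^{xy} \pl_x \pl_y u_i + (1-g^{yy}) \pl_y^2 u_i - \frac{\pl_x u_i}{1 - \eta' w_i}\bigl(g^{xx} \eta'' w_i + 2 g^{xy} \eta' w_i' + g^{yy} \eta w_i''\bigr).
\end{equation*}
The last term containing $\eta(x) w_i''(y) \pl_x u_i$ is the sole carrier of $D^2_{yy} \underline{u}|_{(0,y)}$, so it supplies the $F_{i,1} \cdot D^2_{yy}\underline{u}|_{(0,y)}$ piece with coefficient proportional to $\pl_x u_i$, giving $|F_{i,1}| \leq C(|b|+|c|+|d|)$. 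The quasilinear coefficients $(1-g^{xx}), g^{xy}, (1-g^{yy})$---each of order $O(|b|+|c|+|d|)$ since $g_{ab} - \delta_{ab} = O(u)$---assemble into $F_{i,2}$, and the remaining two terms with $\eta'' w_i$ and $\eta' w_i'$, each a product of $\pl_x u_i$ with $w_i$ or $w_i'$, are genuinely quadratic in $(b,c,d)$ and collect into $F_{i,3}$.

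For the $\CC^1$-compatibility at $x=0$, one has $\eta(0)=1, \eta'(0)=0$, so $\pl_x \Phi_i|_{x=0} = -\bn_i + \pl_x u_i(0,y) \nu_i$ and $\pl_y \Phi_i|_{x=0} = w_i'(y) \bn_i + \pl_y u_i(0,y) \nu_i + e_3 = (\bv'(y), 1)$. The outward unit conormal is $\xi_i = (-\pl_x \Phi_i + \lambda \pl_y \Phi_i)/\|{-\pl_x \Phi_i + \lambda \pl_y \Phi_i}\|$ with $\lambda = \iip{\pl_x \Phi_i}{\pl_y \Phi_i}/|\pl_y \Phi_i|^2$; Taylor expanding to first order in $D\underline{u}|_{(0,y)}$ gives
\begin{equation*}
    \xi_i = \bn_i + \pl_\bn u_i(0,y) \nu_i - w_i'(y) e_3 + R_i,
\end{equation*}
where $R_i$ is smooth and quadratic in $D\underline{u}|_{(0,y)}$. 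Since $\sum \bn_i = 0$ and $\sum w_i \equiv 0$ (so $\sum w_i' = 0$), the sum reduces to $\sum \xi_i = \sum \pl_\bn u_i(0,y) \nu_i + \sum R_i$; decomposing $\sum \pl_\bn u_i \nu_i$ against the standard basis of $\BR^2$ using \eqref{normal} produces exactly the two scalar conditions $\pl_\bn u_2 - \pl_\bn u_3 = G_1$ and $\pl_\bn u_1 - \tfrac{1}{2}(\pl_\bn u_2 + \pl_\bn u_3) = G_2$ with $G_i$ smooth and quadratic in $D\underline{u}|_{(0,y)}$, hence satisfying \eqref{str_G}.

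The full structural bounds in \eqref{str_F}--\eqref{str_G} then follow from the polynomial dependence of each $F_{i,j}, G_i$ on the quantities $(u_i, D u_i, w_i, w_i', w_i'')$ together with smoothness of the rational factor $(1 - \eta' w_i)^{-1} |N|^{-1}$, which is analytic in the small-data regime $\|\underline{u}\|_{2,\af} < \dt/10$. The main obstacle is the interior bookkeeping: verifying that the $D^2_{yy}\underline{u}|_{(0,y)}$ dependence really collapses into a single clean $F_{i,1} \cdot D^2_{yy}\underline{u}|_{(0,y)}$ summand with the claimed first-order bound on its coefficient, and that the division by $(1 - \eta' w_i)|N|$ does not spoil the degree-of-vanishing bookkeeping separating $F_{i,1}, F_{i,2}, F_{i,3}$.
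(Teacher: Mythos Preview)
Your proposal is correct and follows essentially the same route as the paper: you compute the mean curvature of each $\bM_i$ from the parametrization to extract $\Dt u_i = F_i$, and you Taylor-expand the conormals at the spine to first order to read off the two Neumann-type conditions in $\SB$. The only differences are organizational---you work with the unnormalized normal $N$ and the equation $g^{ab}\iip{\pl_a\pl_b\Phi_i}{N}=0$ and then project $\sum\xi_i$ onto the standard basis of $\BR^2$, whereas the paper computes $\tr(g^{-1}h)$ with the unit normal, tracks orders via a ``combination of class $(j,k)$'' formalism, and pairs $\sum\xi_i$ against the two spanning vectors of $(\bv',1)^\perp$; since $(\bv',1)$ has nonzero $e_3$-component, your $\BR^2$-projection is equivalent and the third scalar equation is automatically redundant.
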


\begin{proof}
    We first introduce a terminology to shorten the discussion: $E$ is said to be a combination of $(a_1, a_2, \cdots, a_n)$ and $(b_1, b_2, \cdots, b_m)$ of class $(j, k)$ if it is of the form $$E = \sum_{\substack{j_1 + j_2 + \cdots + j_n = j \\ k_1 + k_2 + \cdots + k_m = k}} E_{j_1 j_2 \cdots j_n k_1 k_2 \cdots k_m} a_1^{j_1} a_2^{j_2} \cdots a_n^{j_n} b_1^{k_1} b_2^{k_2} \cdots b_m^{k_m},$$ where $E_{j_1 j_2 \cdots j_n k_1 k_2 \cdots k_m}$ are smooth functions of $a_1, a_2, \cdots, a_n, b_1, b_2, \cdots, b_m$. In particular, when $b_1 = \cdots = b_m = 1$, we simply call $E$ a combination of $a_1, \cdots, a_n$ of class $j$.
    
    Next, we interpret the minimal condition \eqref{minimal} as a PDE system. The calculations will be performed on a fixed $\bM_i$. For simplicity, let us omit the index $i$ from now on.

    On $\Int \bM_i$, there are two tangent vector fields induced by $\pl_x$ and $\pl_y$ respectively: \begin{equation}
        e_1 \coloneqq (-\bn + u_x \nu + \eta' \bw, 0) \quad \text{and} \quad e_2 \coloneqq ( u_y \nu + \eta \bw', 1).
    \end{equation} Then the induced metric $g$ and its inverse are given by \begin{equation*}
        g = \begin{pmatrix}
       \iip{e_1}{e_1} & \iip{e_1}{e_2} \\
       \iip{e_2}{e_1} & \iip{e_2}{e_2}
    \end{pmatrix} = \mathbb{1} + E_1, \quad g^{-1} = \mathbb{1} + E_2,
    \end{equation*} where $\mathbb{1}$ is the identity matrix, $E_1$ and $E_2$ are $2\times2$-matrices whose entries are combinations of $(u_x, u_y, \iip{\bw}{\bn}, \iip{\bw'}{\bn})$ and $(\eta, \eta')$ of class $(1, 0)$. In addition, the unit normal of $\bM_i$ in $\BR^2 \times \BS^1$ is of the form $$\Tilde{\nu} \coloneqq \frac{1}{\sqrt{1 + \bt^2 + \gm^2}} (\bt \bn + \nu, \gm),$$ where $\bt = \frac{u_x}{1 - \eta' \iip{\bw}{\bn}}$ and $\gm = - u_y - \bt \eta \iip{\bw'}{\bn}$. We then compute the second fundamental form $h$ of $\bM_i \subset \BR^2 \times \BS^1:$ $$\begin{aligned}
        h(e_1, e_1) &= \iip{\nabla_{e_1} e_1}{\Tilde{\nu}} = \iip{(u_{xx} \nu + \eta'' \bw, 0)}{\frac{(\bt \bn + \nu, \gm)}{\sqrt{1 + \bt^2 + \gm^2}}} = u_{xx} + E_3 u_{xx} + E_4, \\
        h(e_1, e_2) &= \iip{\nabla_{e_1} e_2}{\Tilde{\nu}} = \iip{(u_{xy} \nu + \eta' \bw', 0)}{\frac{(\bt \bn + \nu, \gm)}{\sqrt{1 + \bt^2 + \gm^2}}} = u_{xy} + E_5 u_{xy} + E_6, \\
        h(e_2, e_2) &= \iip{\nabla_{e_2} e_2}{\Tilde{\nu}} = \iip{(u_{yy} \nu + \eta \bw'', 0)}{\frac{(\bt \bn + \nu, \gm)}{\sqrt{1 + \bt^2 + \gm^2}}} = u_{yy} + E_7 u_{yy} + E_8 \iip{\bw''}{\bn},
    \end{aligned}$$ where $\nabla$ is the Levi-Civita connection on $(\BR^2 \times \BS^1, \iip{\cdot}{\cdot})$, $E_3, E_5, E_7, E_8$ are combinations of $(u_x, u_y, \iip{\bw}{\bn}, \iip{\bw'}{\bn})$ and $(\eta, \eta', \eta'')$ of class $(1, 0)$, and $E_4, E_6$ are combinations of $(u_x, u_y, \iip{\bw}{\bn}, \iip{\bw'}{\bn})$ and $(\eta, \eta', \eta'')$ of class $(2, 0)$. It follows that the mean curvature vector is of the form $$H_{\bM_i} = \tr(g^{-1}h) \Tilde{\nu} = (u_{xx} + u_{yy} + E_{9} D^2 u + E_{10} \iip{\bw''}{n} + E_{11}) \Tilde{\nu}, $$ where $E_{9}, E_{10}$ are combinations of $(D u, \iip{\bw}{\bn}, \iip{\bw'}{\bn})$ and $(\eta, \eta', \eta'')$ of class $(1, 0)$, and $E_{11}$ is a combination of $(D u, \iip{\bw}{\bn}, \iip{\bw'}{\bn})$ and $(\eta, \eta', \eta'')$ of class $(2, 0)$. Finally, we recall that $\iip{\bw_i(y)}{\bn_i} = \frac{u_{i-1}(0, y) - u_{i+1}(0, y)}{\sqrt{3}}$ to see that the minimal condition \eqref{minimal} is equivalent to that $u_i$ satisfies $$0 = \iip{H_{\bM_i}}{\Tilde{\nu}} = \Dt u_i - F_i$$ in $(0, 1) \times \BS^1$, where $F_i(x, y) = F_{i, 1} \cdot D^2_{yy} \underline{u} \big|_{(0, y)} + F_{i, 2} \cdot D^2 u_i \big|_{(x, y)} + F_{i, 3}$, and $F_{i, 1}, F_{i, 2}$ are combinations of $(\underline{u} \big|_{(0, y)}, D_y \underline{u}\big|_{(0, y)}, Du_i\big|_{(x, y)})$ and $(\eta(x), \eta'(x), \eta''(x))$ of class $(1, 0)$, $F_{i, 3}$ is a combination of $(\underline{u} \big|_{(0, y)}, D_y \underline{u}\big|_{(0, y)}, Du_i\big|_{(x, y)})$ and $(\eta(x), \eta'(x), \eta''(x))$ of class $(2, 0)$. Since $\eta$ has bounded $\CC^2$-norm, $F_{i, 1}, F_{i, 2}$ and $F_{i, 3}$ satisfy the desired structural condition \eqref{str_F}.

Now, we deal with the boundary operator $\SB$. The first term of $\SB\underline{u}$ is automatically $0$ by Lemma \ref{lem_C^0}. It remains to characterize the $\CC^1$-compatibility \eqref{C^1_condition}. It follows from Lemma \ref{lem_C^0} that the spine of $\bM$ is given by $y \mapsto (\bv(y), y)$, $y \in \BS^1$. Therefore, the tangent vector of the spine at $p = (\bv(y), y)$ is $(\bv'(y), 1)$. On the other hand, for fixed $y \in \BS^1$, $t \mapsto (-t \, \bn_i + u_i(t, y) \nu_i + \eta(t) \bw_i(y), y)$ is another curve on $M_i$ which touches the spine at $p$ when $t = 0$. Its tangent vector at $p$ is $(-\bn_i + D_x u_i \big|_{(0, y)} \nu_i, 0)$. Hence, the conormal of the spine in $\bM_i$ at $p$ is given by \begin{equation*}
    (-\bn_i + D_x u_i \big|_{(0, y)} \nu_i, 0) - \frac{\iip{(-\bn_i + D_x u_i \big|_{(0, y)} \nu_i, 0)}{(\bv'(y), 1)}}{|(\bv'(y), 1)|^2} (\bv'(y), 1),
\end{equation*} which is parallel to \begin{equation*}
    \begin{aligned}
        (-& (1 + |\bv'(y)|^2) \bn_i + (D_x u_i \big|_{(0, y)} + D_x u_i \big|_{(0, y)} |\bv'(y)|^2) \nu_i \\
        &+ (\iip{\bv'(y)}{\bn_i} - D_x u_i \big|_{(0, y)} \iip{\bv'(y)}{\nu_i}) \bv'(y), \iip{\bv'(y)}{\bn_i} - D_x u_i \big|_{(0, y)} \iip{\bv'(y)}{\nu_i}).
    \end{aligned}
\end{equation*} Recall that $\bv(y) = \frac{u_{i-1}(0, y) - u_{i+1}(0, y)}{\sqrt{3}} \bn_i + u_i(0, y) \nu_i$ by Lemma \ref{lem_C^0}. Then the unit conormal at $p$ is of the form \begin{equation*}
    \xi_i(y) = (- \bn_i + D_x u_i \big|_{(0, y)} \nu_i, \iip{\bv'(y)}{\bn_i}) + (H_{i, 1} \bn_i + H_{i, 2} \nu_i, H_{i, 3}),
\end{equation*} where $H_{i, 1}, H_{i, 2}, H_{i, 3}$ are combinations of $D\underline{u}\big|_{(0, y)}$ of class $2$. The $\CC^1$-compatibility \eqref{C^1_condition} therefore becomes \begin{equation} \label{C^1_eq} \begin{aligned}
    0 &= \sum_{i = 1}^3 \xi_i(y) \\
    &= (- \sum_{i = 1}^3 \bn_i + \sum_{i = 1}^3 D_x u_i \big|_{(0, y)} \nu_i, \iip{\bv'(y)}{\sum_{i = 1}^3 \bn_i}) + \sum_{i = 1}^3 (H_{i, 1} \bn_i + H_{i, 2} \nu_i, H_{i, 3}) \\
    &= (\sum_{i = 1}^3 D_x u_i \big|_{(0, y)} \nu_i + \sum_{i = 1}^3 H_{i, 1} \bn_i + \sum_{i = 1}^3 H_{i, 2} \nu_i, \sum_{i = 1}^3 H_{i, 3}).
\end{aligned}
\end{equation} We also notice that each $\xi(y)$ lives in the plane that is tangent to $(\bv'(y), 1)$. In other words, \begin{equation} \label{ogbasis}
    \{\xi(y)\}_{i = 1}^3 \in \spn\{(\bn_1, \frac{D_y u_2 \big|_{(0, y)} - D_y u_3 \big|_{(0, y)}}{\sqrt{3}}), (\nu_1, -D_y u_1 \big|_{(0, y)})\}
\end{equation} Recall that $\pl_{\bn} = - \pl_x$, where $\bn$ is the outer normal of $\{0\} \times \BS^1$ in $[0, 1] \times \BS^1 \subset \BR \times \BS^1$. By taking inner product of \eqref{C^1_eq} and vectors in \eqref{ogbasis} respectively, we conclude that the $\CC^1$-compatibility \eqref{C^1_condition} is equivalent to the following two scalar equations:
\begin{equation} \left\{\begin{aligned}
    (\pl_\bn u_2 - \pl_\bn u_3) \big|_{(0, y)} &= G_1\\
    (\pl_\bn u_1 - \frac{1}{2}(\pl_\bn u_2 + \pl_\bn u_3))\big|_{(0, y)} &= G_2,
\end{aligned}\right.
\end{equation} where $G_1, G_2$ are combinations of $D \underline{u} |_{(0, y)}$ of class $2$. This gives the desired structural condition \eqref{str_G} to the last two terms of $\SB\underline{u}$ and finishes the proof.
\end{proof}

\section{\textbf{The Linearization of (\ref{pde})}}

Instead of directly dealing with the quasilinear PDE system \eqref{pde}, we first consider its linearization. In this section, we show the existence and uniqueness of the solution to the inhomogeneous system and derive a Schauder estimate. 

\begin{thm} \label{thm_lin}
    Given any $\underline{F} \in \underline{\CC}^{0, \af}([0, 1] \times \BS^1)$, $\underline{G} = (0, G_1, G_2) \in \underline{\CC}^{1, \af}(\BS^1)$ and $\underline{\vph} \in \underline{\CC}^{2, \af}(\BS^1)$, the inhomogeneous PDE system \begin{equation} \label{pde_linear}
        \left\{\begin{aligned}
            \Dt \underline{u} &= \underline{F} \text{ in } (0, 1) \times \BS^1 \\
            \SB \underline{u} &= \underline{G} \text{ on } \{0\} \times \BS^1 \\
            \underline{u} &= \underline{\vph} \text{ on } \{1\} \times \BS^1
        \end{aligned}\right.
    \end{equation} has a unique solution $\underline{u}_0 \in \underline{\CC}^{2, \af}([0, 1] \times \BS^1)$. Moreover, it has the following estimate: \begin{equation} \label{Schauder}
        \|\underline{u}_0\|_{2, \af} \leq C (\|\underline{F}\|_{0, \af} + \|\underline{G}\|_{1, \af} + \|\underline{\vph}\|_{2, \af}),
    \end{equation} where $C$ is a constant independent of $\underline{F}, \underline{G}$ and $\underline{\vph}$.
\end{thm}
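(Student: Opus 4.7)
The plan is to decouple the coupled elliptic system by a constant linear change of variables that diagonalizes the boundary operator $\SB$, reducing Theorem \ref{thm_lin} to three scalar mixed boundary value problems on the smooth cylinder $[0,1] \times \BS^1$, each handled by standard theory.

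First I would introduce the substitution
\begin{equation*}
v_1 = u_1 + u_2 + u_3, \quad v_2 = u_2 - u_3, \quad v_3 = 2u_1 - u_2 - u_3,
\end{equation*}
whose associated $3 \times 3$ matrix has determinant $-6$, so $\underline{u} \leftrightarrow \underline{v}$ is a linear isomorphism that preserves H\"{o}lder norms up to a fixed multiplicative constant. Because the transformation has constant coefficients, $\Dt v_i$ equals the corresponding linear combination of $F_j$'s, and the three components of $\SB$ separate cleanly: at $\{0\} \times \BS^1$ we get $v_1 = 0$, $\pl_\bn v_2 = G_1$, and $\pl_\bn v_3 = 2G_2$, while at $\{1\} \times \BS^1$ each $v_i$ equals the corresponding linear combination of $\vph_j$'s.

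The problem thus becomes one pure Dirichlet problem for $v_1$ plus two mixed Neumann--Dirichlet problems for $v_2, v_3$, on the smooth domain $[0,1] \times \BS^1$ whose boundary consists of two disjoint smooth circles (no corners). I would solve each by Fourier expansion in $y$: writing $v(x,y) = \sum_{k \in \BZ} \hat{v}_k(x) e^{2\pi i k y}$, each mode $\hat{v}_k$ solves a linear ODE $\hat{v}_k'' - 4\pi^2 k^2 \hat{v}_k = \hat{F}_k$ on $[0,1]$ with two-point boundary conditions. For every $k$ this ODE is uniquely solvable in closed form (affine function for $k = 0$, combination of exponentials $e^{\pm 2\pi |k| x}$ for $k \neq 0$), which yields both existence and uniqueness of the weak solution. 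For the $\underline{\CC}^{2,\af}$-regularity and the Schauder estimate \eqref{Schauder}, I would invoke the standard boundary Schauder theory for Dirichlet and oblique-derivative problems on smooth domains (e.g.\ Gilbarg--Trudinger, Chapter 6), applied separately near each of the two boundary circles.

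Transforming back via the inverse matrix then gives $\underline{u}_0 \in \underline{\CC}^{2,\af}([0,1] \times \BS^1)$ solving \eqref{pde_linear} together with \eqref{Schauder}, the constant $C$ depending only on the transformation matrix and the standard Schauder constants for the cylinder. The step requiring the most care is the Schauder estimate for the two oblique-derivative components; however, since the two boundary pieces are smooth and disjoint, the Lopatinskii--Shapiro complementing condition holds trivially and there is no hidden compatibility condition to verify between $\underline{F}$, $\underline{G}$, and $\underline{\vph}$, so the standard theory applies without modification.
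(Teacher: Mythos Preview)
Your proposal is correct and follows essentially the same route as the paper: decouple via the constant linear substitution $v_1 = u_1 + u_2 + u_3$, $v_2 = u_2 - u_3$, $v_3 = 2u_1 - u_2 - u_3$ (the paper uses $v_3 = u_1 - \tfrac{1}{2}(u_2 + u_3)$, a harmless rescaling), reduce to one Dirichlet and two mixed Neumann--Dirichlet scalar problems on $[0,1]\times\BS^1$, solve these by Fourier series in $y$ plus classical boundary Schauder theory, and transform back. The only point the paper makes more explicit is the absorption of the lower-order term $\|v\|_0$ in the Schauder estimate (via a barrier/maximum-principle argument for the Dirichlet piece and a compactness--uniqueness argument for the mixed pieces), which your sketch implicitly folds into ``standard theory.''
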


Before tackling the complicated system \eqref{pde_linear}, we start with two simpler scalar-valued PDEs. One is the Dirichlet problem, and the other is the mixed boundary value problem.

\begin{prop} \label{prop_lin_Diri}
    Given any $f \in \CC^{0, \af}([0, 1] \times \BS^1)$ and $\vph \in \CC^{2, \af}(\BS^1)$, the inhomogeneous PDE \begin{equation} \label{pde_linear_Diri}
        \left\{\begin{aligned}
            \Dt v &= f \text{ in } (0, 1) \times \BS^1 \\
            v &= 0 \text{ on } \{0\} \times \BS^1 \\
            v &= \vph \text{ on } \{1\} \times \BS^1
        \end{aligned}\right.
    \end{equation} has a unique solution $v_0 \in \CC^{2, \af}([0, 1] \times \BS^1)$. Moreover, it has the following estimate: \begin{equation} \label{Schauder_Diri}
        \|v_0\|_{2, \af} \leq C (\|f\|_{0, \af} + \|\vph\|_{2, \af}),
    \end{equation} where $C$ is a constant independent of $f$ and $\vph$.
\end{prop}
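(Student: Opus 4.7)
The domain $[0,1]\times\BS^1$ is a smooth compact Riemannian manifold with smooth boundary $\{0\}\times\BS^1\sqcup\{1\}\times\BS^1$, and (\ref{pde_linear_Diri}) imposes pure Dirichlet data on both components, so it is a standard Dirichlet problem for the flat Laplacian with no singular behavior. My plan is to deduce existence, uniqueness, and the Schauder estimate directly from classical elliptic theory. Uniqueness is immediate from the strong maximum principle: the difference of two $\CC^{2,\af}$ solutions is harmonic with zero Dirichlet data on all of $\pl([0,1]\times\BS^1)$, hence identically zero.

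For existence, I would first reduce to homogeneous boundary data by setting $\Phi(x,y) := x\,\vph(y)\in\CC^{2,\af}([0,1]\times\BS^1)$, which satisfies $\Phi|_{x=0}=0$, $\Phi|_{x=1}=\vph$, and $\|\Phi\|_{2,\af}\leq C\|\vph\|_{2,\af}$. Writing $v=\Phi+w$ reduces the problem to finding $w$ with $w|_{x=0}=w|_{x=1}=0$ and $\Dt w = \tilde f$, where $\tilde f:=f-\Dt\Phi\in\CC^{0,\af}$ and $\|\tilde f\|_{0,\af}\leq\|f\|_{0,\af}+C\|\vph\|_{2,\af}$. To solve the reduced problem, the most transparent route is Fourier expansion in $y$: writing $\tilde f=\sum_{k\in\BZ}\tilde f_k(x)\,e^{2\pi iky}$ and $w=\sum_{k\in\BZ}w_k(x)\,e^{2\pi iky}$, each Fourier coefficient must solve $w_k''-4\pi^2 k^2 w_k=\tilde f_k$ on $[0,1]$ with $w_k(0)=w_k(1)=0$, which has a unique explicit solution via the Green's function of $\pl_x^2-4\pi^2 k^2$. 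Reassembling the series yields a weak solution $w$; alternatively, one can directly invoke classical solvability of the Dirichlet problem on smooth manifolds with boundary.

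The Schauder estimate (\ref{Schauder_Diri}) then follows by combining the $\CC^0$-bound $\|v_0\|_0\leq\|f\|_0+\|\vph\|_0$ (from the maximum principle) with the global boundary Schauder estimate for second-order elliptic equations on smooth domains (e.g., Gilbarg--Trudinger Theorem 6.6), applied on a smooth atlas of $[0,1]\times\BS^1$. Translation invariance in the $y$-direction ensures the Schauder constants are uniform along $\BS^1$. I do not expect any step here to be genuinely difficult: the proposition is really a routine packaging of classical elliptic theory, included mainly as a warm-up and to fix notation for the harder system (\ref{pde_linear}) in Theorem \ref{thm_lin}, where the boundary operator $\SB$ mixes the three components and the mixed (Neumann-type) conditions in the last two slots of $\SB$ will require more care.
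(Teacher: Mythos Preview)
Your proposal is correct and follows essentially the same approach as the paper: uniqueness via the maximum principle (the paper uses Green's identity, which is equivalent), the $\CC^0$-bound via a barrier/maximum principle argument, the full Schauder estimate from the classical Gilbarg--Trudinger theory, and existence via Fourier expansion in $y$ reducing to ODEs in $x$. The only procedural difference is that the paper first establishes existence for smooth data and then passes to $\CC^{0,\af}$ data by a closed-range-plus-density argument, whereas you absorb this step into ``invoke classical solvability''; your reduction to homogeneous boundary data via $\Phi=x\vph$ is a minor convenience the paper omits.
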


\begin{proof}
    We first show the uniqueness. By subtracting the solutions, it suffices to show that \begin{equation*}
        \left\{\begin{aligned}
            \Dt v &= 0 \text{ in } (0, 1) \times \BS^1 \\
            v &= 0 \text{ on } (\{0\} \times \BS^1) \cup (\{1\} \times \BS^1)
        \end{aligned}\right.
    \end{equation*} has only zero solution. Indeed, Green's first identity leads to $$ 0 = \int_{(\{0\} \times \BS^1) \cup (\{1\} \times \BS^1)} v \pl_\bn v \, d \sm = \int_{(0, 1) \times \BS^1} v \Dt v \, dV + \int_{(0, 1) \times \BS^1} |\nabla v|^2 \, dV = \int_{(0, 1) \times \BS^1} |\nabla v|^2 dV.$$ Therefore, $\nabla v \equiv 0$ and $v \equiv 0$ by the boundary condition.

    Next, we derive the apriori estimate. Suppose that $v \in \CC^{2, \af}([0, 1] \times \BS^1)$. Since $[0, 1] \times \BS^1$ is compact, the classical interior (cf. \cite[Theorem 6.2]{GT01}) and boundary Schauder estimates (cf. \cite[Lemma 6.4, Theorem 6.6]{GT01}) yield \begin{equation} \label{Schauder_cl_Diri}
        \|v\|_{2, \af} \leq C (\|\Dt v\|_{0, \af} + \|v\big|_{\{0\}\times \BS^1}\|_{2, \af} + \|v\big|_{\{1\}\times \BS^1}\|_{2, \af} + \|v\|_0),
    \end{equation} where $C$ is a constant independent of $v$. We claim that \begin{equation} \label{ineq_max_Diri}
        \|v\|_0 \leq C (\|\Dt v\|_{0} + \sup_{(\{0\}\times \BS^1) \cup (\{1\}\times \BS^1)} |v|)
    \end{equation} for some constant $C$ independent of $v$. Let $F = \sup_{[0, 1] \times \BS^1} \max\{-\Dt v, 0\}$ and $\Phi = \sup_{(\{0\}\times \BS^1) \cup (\{1\}\times \BS^1)} \max \{v, 0\}$. Consider $$w(x, y) \coloneqq \Phi + (e - e^x) F \in \CC^2([0, 1] \times \BS^1)$$ to be the barrier. Note that $\Dt w = - e^x F \leq - F$ in $(0, 1) \times \BS^1$ and $v \geq \Phi$ in $[0, 1] \times \BS^1$. Then $\Dt (v-w) \geq \Dt v + F \geq 0$ in $(0, 1) \times \BS^1$ and $v - w \leq 0$ on $(\{0\}\times \BS^1) \cup (\{1\}\times \BS^1)$. Therefore, the maximum principle (cf. \cite[Theorem 3.73]{Aub82}) leads to $v - w \leq 0$ in $[0, 1] \times \BS^1$, which means that \begin{equation} \label{ineq_max_Diri_1}
        \sup_{[0, 1] \times \BS^1} v \leq \sup_{[0, 1] \times \BS^1} w \leq \Phi + e F \leq e(\sup_{[0, 1] \times \BS^1} \max\{-\Dt v, 0\} + \sup_{(\{0\}\times \BS^1) \cup (\{1\}\times \BS^1)} \max \{v, 0\}).
    \end{equation} A similar argument, by replacing $v$ with $-v$, shows that \begin{equation} \label{ineq_max_Diri_2}
        \sup_{[0, 1] \times \BS^1} - v \leq e(\sup_{[0, 1] \times \BS^1} \max\{\Dt v, 0\} + \sup_{(\{0\}\times \BS^1) \cup (\{1\}\times \BS^1)} \max \{-v, 0\}).
    \end{equation} \eqref{ineq_max_Diri} now follows from adding \eqref{ineq_max_Diri_1} and \eqref{ineq_max_Diri_2} together. Combining \eqref{Schauder_cl_Diri} with \eqref{ineq_max_Diri} gives \begin{equation} \label{apri_Diri}
         \|v\|_{2, \af} \leq C (\|\Dt v\|_{0, \af} + \|v\big|_{\{0\}\times \BS^1}\|_{2, \af} + \|v\big|_{\{1\}\times \BS^1}\|_{2, \af}),
    \end{equation} which is exactly \eqref{Schauder_Diri}.

    The last step is to prove the existence. We first show that \eqref{pde_linear_Diri} has a smooth solution if $f \in \CC^{\infty}([0, 1] \times \BS^1)$ and $\vph \in \CC^{\infty}(\BS^1)$. In this case, $f$ has a uniform convergent Fourier series $$f(x, y) = f_0(x) + \sum_{k \in \BN} (f_{k, 1}(x) \cos(2 \pi k y) + f_{k, 2}(x) \sin(2 \pi k y)).$$ Moreover, for every $m_1 \in \BR$, $f_{k, j}$ has the decay $\|f_{k, j}\|_0 \leq C_{m_1} (1 + k)^{-m_1}$ for some constant $C_{m_1}$ independent of $k$ and $j$ (cf. \cite[Corollary 3.3.10 (b)]{Gra14}). Similarly, $\vph$ also has a uniform convergent Fourier series $$\vph(y) = \vph_0 + \sum_{k \in \BN} (\vph_{k, 1} \cos(2 \pi k y) + \vph_{k, 2} \sin(2 \pi k y))$$ with decay $|\vph_{k, j}| \leq C_{m_2} (1 + k)^{-{m_2}}$ for all $m_2 \in \BR$. We now fix $m = m_1 + 2 = m_2$ and write $$v(x, y) = a_0(x) + \sum_{k \in \BN} (a_k(x) \cos(2 \pi k y) + b_k(x) \sin(2 \pi k y))$$ and try to solve $a_k, b_k$. Note that $$\Dt v = v_{xx} + v_{yy} = a_0''(x) + \sum_{k \in \BN} ((a_k''- 4 \pi^2 k^2 a_k)(x) \cos(2 \pi k y) + (b_k''- 4 \pi^2 k^2 b_k)(x) \sin(2 \pi k y)).$$ By comparing the coefficients, $a_0$ satisfies the second order ODE \begin{equation}
        \left\{\begin{aligned}
            &a_0''(x) = f_0(x) \quad \text{for } x \in (0, 1)\\
            &a_k(0) = 0 \\
            &a_k(1) = \vph_0,
        \end{aligned}\right.
    \end{equation} $a_k$ with $k \neq 0$ satisfies \begin{equation}
        \left\{\begin{aligned}
            &a_k''(x) - 4 \pi^2 k^2 a_k(x) = f_{k, 1}(x) \quad \text{for } x \in (0, 1)\\
            &a_k(0) = 0 \\
            &a_k(1) = \vph_{k ,1},
        \end{aligned}\right.
    \end{equation} and $b_k$ satisfies \begin{equation}
        \left\{\begin{aligned}
            &b_k''(x)- 4 \pi^2 k^2 b_k(x) = f_{k, 2}(x) \quad \text{for } x \in (0, 1)\\
            &b_k(0) = 0 \\
            &b_k(1) = \vph_{k ,2}.
        \end{aligned}\right.
    \end{equation} Therefore, $a_0$ is of the form \begin{equation} \label{a_0}
        a_0(x) = \int_0^x \int_0^s f_0(t) \, dt \, ds + (\vph_0 - \int_0^1 \int_0^s f_0(t) \, dt \, ds) x.
    \end{equation} On the other hand, it follows from the classical ODE theory that the general solution to $a_k''- 4 \pi^2 k^2 a_k = f_{k, 1}$ is of the form \begin{equation} \label{a_k}
        a_k(x) = (\int_1^x f_{k, 1}(t) e^{-2 \pi k t} \, dt + C_1) \frac{e^{2 \pi k x}}{4 \pi k} - (\int_0^x f_{k, 1}(t) e^{2 \pi k t} \, dt + C_2) \frac{e^{-2 \pi k x}}{4 \pi k}, 
    \end{equation} where $C_1, C_2$ are constants. Plugging in the boundary condition $a_k(0) = 0$ and $            a_k(1) = \vph_{k ,1}$, we see $$C_1 = \frac{4 \pi k e^{2 \pi k} \vph_{k, 1} - \int_0^1 f_{k, 1}(t) e^{-2 \pi k t} \, dt + \int_0^1 f_{k, 1}(t) e^{2 \pi k t} \, dt}{e^{4 \pi k} - 1}$$ and $$C_2 = \frac{4 \pi k e^{2 \pi k} \vph_{k, 1} - e^{4 \pi k} \int_0^1 f_{k, 1}(t) e^{-2 \pi k t} \, dt + \int_0^1 f_{k, 1}(t) e^{2 \pi k t} \, dt}{e^{4 \pi k} - 1}.$$ Similarly, \begin{equation} \label{b_k}
        b_k(x) = (\int_1^x f_{k, 2}(t) e^{-2 \pi k t} \, dt + \Tilde{C}_1) \frac{e^{2 \pi k x}}{4 \pi k} - (\int_0^x f_{k, 2}(t) e^{2 \pi k t} \, dt + \Tilde{C}_2) \frac{e^{-2 \pi k x}}{4 \pi k},
    \end{equation} where $$\Tilde{C}_1 = \frac{4 \pi k e^{2 \pi k} \vph_{k, 2} - \int_0^1 f_{k, 2}(t) e^{-2 \pi k t} \, dt + \int_0^1 f_{k, 2}(t) e^{2 \pi k t} \, dt}{e^{4 \pi k} - 1}$$ and $$\Tilde{C}_2 = \frac{4 \pi k e^{2 \pi k} \vph_{k, 2} - e^{4 \pi k} \int_0^1 f_{k, 2}(t) e^{-2 \pi k t} \, dt + \int_0^1 f_{k, 2}(t) e^{2 \pi k t} \, dt}{e^{4 \pi k} - 1}.$$ Note that \eqref{a_0}, \eqref{a_k}, \eqref{b_k} and the decay of $f_{k ,j}$, $\vph_{k, j}$ yield the decay of $a_k^{(n)}$, $b_k^{(n)}$: $$\|a_k^{(n)}\|_0 + \|b_k^{(n)}\|_0 \leq C (1 + k)^{-m + n},$$ where $C$ is some constant independent of $k$. Plancherel’s identity (cf. \cite[Proposition 3.2.7 (1)]{Gra14}) therefore shows that $v \in H^{m - \frac{1}{2} - \vep}([0, 1] \times \BS^1)$ for all $\vep > 0$. Then Sobolev embedding (cf. \cite[Theorem 2.30]{Aub82}) gives $v \in \CC^{m - \frac{3}{2} - \vep}([0, 1] \times \BS^1)$ for all $\vep > 0$. Let $m \rightarrow \infty$. We conclude that $v \in \CC^{\infty}([0, 1] \times \BS^1)$, and it is the solution, by the construction, to \eqref{pde_linear_Diri} whenever $f \in \CC^{\infty}([0, 1] \times \BS^1)$ and $\vph \in \CC^{\infty}(\BS^1)$.
    
    Finally, we consider the bounded linear operator $$\begin{aligned}
        \SL: \CC^{2, \af}([0, 1] \times \BS^1) &\rightarrow \CC^{0, \af}([0, 1] \times \BS^1) \times \CC^{2, \af}(\BS^1) \\
        v &\mapsto (\Dt v, v \big|_{\{1\} \times \BS^1}).
    \end{aligned}$$ By the apriori estimate \eqref{apri_Diri}, $\SL$ has closed range. On the other hand, since $\CC^\infty([0, 1] \times \BS^1) \times \CC^\infty(\BS^1)$ is dense in $\CC^{0, \af}([0, 1] \times \BS^1) \times \CC^{2, \af}(\BS^1)$, the previous paragraph shows that the range of $\SL$ contains a dense subset. It follows that $\SL$ is surjective. This implies that \eqref{pde_linear_Diri} has a solution whenever $f \in \CC^{0, \af}([0, 1] \times \BS^1)$ and $\vph \in \CC^{2, \af}(\BS^1)$ and finishes the proof.
\end{proof}

\begin{prop} \label{prop_lin_mix}
    Given any $f \in \CC^{0, \af}([0, 1] \times \BS^1)$, $g \in \CC^{1, \af}(\BS^1)$ and $\vph \in \CC^{2, \af}(\BS^1)$, the inhomogeneous PDE \begin{equation} \label{pde_linear_mix}
        \left\{\begin{aligned}
            \Dt v &= f \text{ in } (0, 1) \times \BS^1 \\
            \pl_\bn v &= g \text{ on } \{0\} \times \BS^1 \\
            v &= \vph \text{ on } \{1\} \times \BS^1
        \end{aligned}\right.
    \end{equation} has a unique solution $v \in \CC^{2, \af}([0, 1] \times \BS^1)$. Moreover, it has the following estimate: \begin{equation} \label{Schauder_mix}
        \|v\|_{2, \af} \leq C (\|f\|_{0, \af} + \|g\|_{1, \af} + \|\vph\|_{2, \af}),
    \end{equation} where $C$ is a constant independent of $f$, $g$ and $\vph$.
\end{prop}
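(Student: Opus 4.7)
The plan is to parallel the three-step structure of Proposition \ref{prop_lin_Diri}: first uniqueness from an energy identity, then an a priori Schauder estimate built from classical interior/boundary theory together with an $L^\infty$-bound via a barrier, and finally existence by Fourier decomposition for smooth data followed by a closed-range argument.

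For uniqueness, subtracting two solutions reduces to the homogeneous problem. Green's first identity then gives
$$\int_{(0,1)\times \BS^1} |\nabla v|^2 \, dV = -\int v\, \Dt v \, dV + \int_{\pl([0,1]\times\BS^1)} v\, \pl_\bn v \, d\sm = 0,$$
since $\pl_\bn v = 0$ on $\{0\}\times \BS^1$ and $v = 0$ on $\{1\}\times \BS^1$. Hence $v$ is constant, and the Dirichlet datum at $x=1$ forces $v \equiv 0$. For the a priori estimate, the classical interior Schauder estimate together with the boundary Schauder estimates (Dirichlet near $\{1\}\times \BS^1$ and oblique derivative near $\{0\}\times \BS^1$, applied on disjoint neighborhoods of the two boundary components) yield
$$\|v\|_{2,\af} \leq C \left(\|\Dt v\|_{0,\af} + \|\pl_\bn v|_{\{0\}\times \BS^1}\|_{1,\af} + \|v|_{\{1\}\times \BS^1}\|_{2,\af} + \|v\|_0 \right).$$
To absorb $\|v\|_0$, I set $F = \sup \max\{-\Dt v, 0\}$, $G = \sup |g|$, $\Phi = \sup \max\{v|_{\{1\}\times \BS^1}, 0\}$ and take the barrier $w(x,y) := \Phi + G(1-x) + F(e - e^x)$. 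Then $\Dt w = -Fe^x \leq -F$, $\pl_\bn w|_{\{0\}\times\BS^1} = G + F \geq g$, and $w|_{\{1\}\times\BS^1} = \Phi \geq v|_{\{1\}\times\BS^1}$, so $\Dt(v-w) \geq 0$ while $v-w \leq 0$ on $\{1\}\times \BS^1$ and $\pl_\bn(v-w) \leq 0$ on $\{0\}\times \BS^1$. The weak maximum principle together with the Hopf boundary point lemma (which rules out a positive interior maximum on $\{0\}\times \BS^1$) forces $v \leq w \leq \Phi + G + eF$. Repeating the argument for $-v$ yields $\|v\|_0 \leq C(\|f\|_0 + \|g\|_0 + \|\vph\|_0)$, which is then absorbed into the desired Schauder estimate.

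For existence, I first solve \eqref{pde_linear_mix} when $f, g, \vph$ are smooth by expanding in a Fourier series on $\BS^1$. Each Fourier coefficient satisfies a second-order linear ODE on $[0,1]$: $a_0'' = f_0$ with $-a_0'(0) = g_0$ and $a_0(1) = \vph_0$; for $k \neq 0$, $a_k'' - 4\pi^2 k^2 a_k = f_{k,1}$ with $-a_k'(0) = g_{k,1}$ and $a_k(1) = \vph_{k,1}$ (and analogously for $b_k$). These are uniquely solvable by variation of parameters, producing explicit formulas in the spirit of \eqref{a_0}, \eqref{a_k}, \eqref{b_k}, and the rapid decay of the Fourier coefficients of smooth data transfers to $\|a_k^{(n)}\|_0 + \|b_k^{(n)}\|_0 \leq C(1+k)^{-m+n}$ for arbitrary $m$. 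Plancherel plus Sobolev embedding give $v \in \CC^\infty$. The bounded operator
$$\SL : \CC^{2,\af}([0,1]\times\BS^1) \to \CC^{0,\af}([0,1]\times\BS^1) \times \CC^{1,\af}(\BS^1) \times \CC^{2,\af}(\BS^1), \quad \SL(v) = (\Dt v, \pl_\bn v|_{\{0\}\times\BS^1}, v|_{\{1\}\times\BS^1})$$
then has closed range by the a priori estimate and dense range by the smooth case, hence is surjective.

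The genuinely new ingredient compared with Proposition \ref{prop_lin_Diri} is the Neumann condition on $\{0\}\times \BS^1$: it requires the Hopf lemma in the maximum-principle step, a linear $G(1-x)$ summand in the barrier to absorb $g$, and an additional $\CC^{1,\af}$-datum to track in both the Schauder estimate and the Fourier-coefficient ODEs. Of these, the step most in need of careful bookkeeping is the smooth-case existence, where the explicit ODE solutions with mixed endpoint conditions must be checked to produce coefficient decay matching that in the Dirichlet case; everything else is a routine adaptation.
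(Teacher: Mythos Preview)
Your proof is correct and follows the paper's three-step architecture (uniqueness via Green, Schauder-plus-$L^\infty$ bound, existence via Fourier then closed range), but there is one genuine methodological difference worth noting. For the $L^\infty$ estimate $\|v\|_0 \leq C(\|\Dt v\|_0 + \sup_{\{0\}\times\BS^1}|\pl_\bn v| + \sup_{\{1\}\times\BS^1}|v|)$, you construct an explicit barrier $w = \Phi + G(1-x) + F(e-e^x)$ and invoke the Hopf lemma on the Neumann face, in direct analogy with the Dirichlet case. The paper instead switches to a compactness/contradiction argument: normalize $\|v_i\|_0 = 1$ with the right-hand side tending to zero, use \eqref{Schauder_cl_mix} to bound $\{v_i\}$ in $\CC^{2,\af}$, extract a $\CC^2$-convergent subsequence via Arzel\`a--Ascoli, and reach a contradiction with uniqueness. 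Your barrier route is more constructive and yields explicit constants, and it parallels the Dirichlet argument cleanly; the paper's compactness route avoids the Hopf lemma entirely and is more robust to situations where a barrier is awkward to write down, at the cost of a non-explicit constant. Both are standard and complete; the remainder of your proof (uniqueness, Schauder, Fourier existence, closed/dense range) matches the paper essentially line for line.
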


\begin{proof}
    We first show the uniqueness. By subtracting the solutions, it suffices to show that \begin{equation*}
        \left\{\begin{aligned}
            \Dt v &= 0 \text{ in } (0, 1) \times \BS^1 \\
            \pl_\bn v &= 0 \text{ on } \{0\} \times \BS^1 \\
            v &= 0 \text{ on } \{1\} \times \BS^1
        \end{aligned}\right.
    \end{equation*} has only zero solution. As in the proof of Proposition \ref{prop_lin_Diri}, Green's first identity yields $v \equiv 0$.

    Next, we derive the apriori estimate. Suppose that $v \in \CC^{2, \af}([0, 1] \times \BS^1)$. Due to the compactness of $[0, 1] \times \BS^1$ and that the Neumann boundary $\{0\} \times \BS^1$ and the Dirichlet boundary $\{1\} \times \BS^1$ are separated, the interior Schauder estimate (cf. \cite[Theorem 6.2]{GT01}), the boundary Schauder estimate for Dirichlet problem (cf. \cite[Lemma 6.4, Theorem 6.6]{GT01}) and the boundary Schauder estimate for Neumann problem (\cite[Lemma 6.27]{GT01}) yield \begin{equation} \label{Schauder_cl_mix}
        \|v\|_{2, \af} \leq C (\|\Dt v\|_{0, \af} + \|\pl_\bn v\big|_{\{0\}\times \BS^1}\|_{1, \af} + \|v\big|_{\{1\}\times \BS^1}\|_{2, \af} + \|v\|_0),
    \end{equation} where $C$ is a constant independent of $v$. We claim that \begin{equation} \label{ineq_max_mix}
        \|v\|_0 \leq C (\|\Dt v\|_{0} + \sup_{\{0\}\times \BS^1} |\pl_\bn v| + \sup_{\{1\}\times \BS^1} |v|)
    \end{equation} for some constant $C$ independent of $v$. Suppose not, then there exists a sequence $\{v_i \in \CC^{2,\af}([0, 1] \times \BS^1)\}_{i = 1}^{\infty}$ such that $\|v_i\|_0 = 1$ but $\|\Dt v_i\|_{0} + \sup_{\{0\}\times \BS^1} |\pl_\bn v_i| + \sup_{\{1\}\times \BS^1} |v_i| \rightarrow 0$ as $i \rightarrow \infty$. By \eqref{Schauder_cl_mix}, $\{v_i\}$ is a bounded sequence in $\CC^{2,\af}([0, 1] \times \BS^1)$. Since $\CC^{2}([0, 1] \times \BS^1)$ embeds in $\CC^{2,\af}([0, 1] \times \BS^1)$ compactly by Arzel\`{a}-Ascoli theorem, after passing to a subsequence, we may assume that there exists $w \in \CC^{2}([0, 1] \times \BS^1)$ such that $v_i \rightarrow w$ as $i \rightarrow$ in the $\CC^2$-topology. Then $\Dt w = \lim_{i \rightarrow \infty} \Dt v_i = 0$, $\pl_\bn w \big|_{\{0\} \times \BS^1} = \lim_{i \rightarrow \infty} \pl_\bn {v_i}\big|_{\{0\} \times \BS^1} = 0$, and $w\big|_{\{1\} \times \BS^1} = \lim_{i \rightarrow \infty} {v_i}\big|_{\{1\} \times \BS^1} = 0$. Hence, by the arguments for uniqueness, $w = 0$. However, $\|w\|_0 = \lim_{i \rightarrow \infty} \|v_i\|_0 = 1$, which is a contradiction. We therefore show \eqref{ineq_max_mix}. Combining \eqref{Schauder_cl_mix} with \eqref{ineq_max_mix} leads to \begin{equation} \label{apri_mix}
         \|v\|_{2, \af} \leq C (\|\Dt v\|_{0, \af} + \|\pl_\bn v\big|_{\{0\}\times \BS^1}\|_{1, \af} + \|v\big|_{\{1\}\times \BS^1}\|_{2, \af}),
    \end{equation} which is exactly \eqref{Schauder_mix}.

    Finally, we show the existence. As in the proof of Proposition \ref{prop_lin_Diri}, we first assume that $f$, $g$, $\vph$ are smooth and use Fourier series to construct the smooth solution to \eqref{pde_linear_mix} directly. Again, $f$ has a uniform convergent Fourier series $$f(x, y) = f_0(x) + \sum_{k \in \BN} (f_{k, 1}(x) \cos(2 \pi k y) + f_{k, 2}(x) \sin(2 \pi k y))$$ with decay $\|f_{k, j}\|_0 \leq C_{m_1} (1 + k)^{-m_1}$ for all $m_1 \in \BR$, $\vph$ has a uniform convergent Fourier series $$\vph(y) = \vph_0 + \sum_{k \in \BN} (\vph_{k, 1} \cos(2 \pi k y) + \vph_{k, 2} \sin(2 \pi k y))$$ with decay $|\vph_{k, j}| \leq C_{m_2} (1 + k)^{-{m_2}}$ for all $m_2 \in \BR$, and $g$ has a uniform convergent Fourier series $$g(y) = g_0 + \sum_{k \in \BN} (g_{k, 1} \cos(2 \pi k y) + g_{k, 2} \sin(2 \pi k y))$$ with decay $|g_{k, j}| \leq C_{m_3} (1 + k)^{-{m_3}}$ for all $m_3 \in \BR$. We now fix $m = m_1 + 2 = m_2 = m_3 + 1$ and write $$v(x, y) = a_0(x) + \sum_{k \in \BN} (a_k(x) \cos(2 \pi k y) + b_k(x) \sin(2 \pi k y)).$$ Since $\pl_\bn = - \pl_x$, $a_0$ satisfies the second order ODE \begin{equation}
        \left\{\begin{aligned}
            &a_0''(x) = f_0(x) \quad \text{for } x \in (0, 1)\\
            &a_k'(0) = - g_0 \\
            &a_k(1) = \vph_0,
        \end{aligned}\right.
    \end{equation} $a_k$ with $k \neq 0$ satisfies \begin{equation}
        \left\{\begin{aligned}
            &a_k''(x)- 4 \pi^2 k^2 a_k(x) = f_{k, 1}(x) \quad \text{for } x \in (0, 1)\\
            &a_k'(0) = - g_{k, 1} \\
            &a_k(1) = \vph_{k ,1},
        \end{aligned}\right.
    \end{equation} and $b_k$ satisfies \begin{equation}
        \left\{\begin{aligned}
            &b_k''(x)- 4 \pi^2 k^2 b_k(x) = f_{k, 2}(x) \quad \text{for } x \in (0, 1)\\
            &b_k'(0) = - g_{k, 2} \\
            &b_k(1) = \vph_{k ,2}.
        \end{aligned}\right.
    \end{equation} As in the proof of Proposition \ref{prop_lin_Diri}, \begin{equation} \label{a_0(mix)}
        a_0(x) = \int_0^x \int_0^s f_0(t) \, dt \, ds - g_0 x + \vph_0 - \int_0^1 \int_0^s f_0(t) \, dt \, ds + g_0;
    \end{equation} \begin{equation} \label{a_k(mix)}
        a_k(x) = (\int_1^x f_{k, 1}(t) e^{-2 \pi k t} \, dt + C_1) \frac{e^{2 \pi k x}}{4 \pi k} - (\int_0^x f_{k, 1}(t) e^{2 \pi k t} \, dt + C_2) \frac{e^{-2 \pi k x}}{4 \pi k}, 
    \end{equation} where $$C_1 = \frac{4 \pi k e^{2 \pi k} \vph_{k, 1} - 2 g_{k, 1} + \int_0^1 f_{k, 1}(t) e^{-2 \pi k t} \, dt + \int_0^1 f_{k, 1}(t) e^{2 \pi k t} \, dt}{e^{4 \pi k} + 1}$$ and $$C_2 = \frac{- 4 \pi k e^{2 \pi k} \vph_{k, 1} - 2 e^{4 \pi k} g_{k, 1} + e^{4 \pi k} \int_0^1 f_{k, 1}(t) e^{-2 \pi k t} \, dt - \int_0^1 f_{k, 1}(t) e^{2 \pi k t} \, dt}{e^{4 \pi k} + 1};$$ \begin{equation} \label{b_k(mix)}
        b_k(x) = (\int_1^x f_{k, 2}(t) e^{-2 \pi k t} \, dt + \Tilde{C}_1) \frac{e^{2 \pi k x}}{4 \pi k} - (\int_0^x f_{k, 2}(t) e^{2 \pi k t} \, dt + \Tilde{C}_2) \frac{e^{-2 \pi k x}}{4 \pi k}, 
    \end{equation} where $$\Tilde{C}_1 = \frac{4 \pi k e^{2 \pi k} \vph_{k, 2} - 2 g_{k, 2} + \int_0^1 f_{k, 2}(t) e^{-2 \pi k t} \, dt + \int_0^1 f_{k, 2}(t) e^{2 \pi k t} \, dt}{e^{4 \pi k} + 1}$$ and $$\Tilde{C}_2 = \frac{- 4 \pi k e^{2 \pi k} \vph_{k, 2} - 2 e^{4 \pi k} g_{k, 2} + e^{4 \pi k} \int_0^1 f_{k, 2}(t) e^{-2 \pi k t} \, dt - \int_0^1 f_{k, 2}(t) e^{2 \pi k t} \, dt}{e^{4 \pi k} + 1}.$$ Note that \eqref{a_0(mix)}, \eqref{a_k(mix)}, \eqref{b_k(mix)} and the decay of $f_{k ,j}$, $g_{k, j}$, $\vph_{k, j}$ yield the decay of $a_k^{(n)}$, $b_k^{(n)}$: $$\|a_k^{(n)}\|_0 + \|b_k^{(n)}\|_0 \leq C (1 + k)^{-m + n},$$ where $C$ is some constant independent of $k$. Therefore, the same argument as in the proof of Proposition \ref{prop_lin_Diri} shows that $v$ is a smooth solution to \eqref{pde_linear_mix} when $f$, $g$, $\vph$ are smooth. Now, we define the bounded linear operator $$\begin{aligned}
        \SL: \CC^{2, \af}([0, 1] \times \BS^1) &\rightarrow \CC^{0, \af}([0, 1] \times \BS^1) \times \CC^{1, \af}(\BS^1) \times \CC^{2, \af}(\BS^1) \\
        v &\mapsto (\Dt v, \pl_\bn v \big|_{\{0\} \times \BS^1}, v \big|_{\{1\} \times \BS^1}).
    \end{aligned}$$ The apriori estimate \eqref{apri_mix} and the solvability of \eqref{pde_linear_mix} when $f$, $g$, $\vph$ are smooth show that the range of $\SL$ is a closed dense subset of $\CC^{0, \af}([0, 1] \times \BS^1) \times \CC^{1, \af}(\BS^1) \times \CC^{2, \af}(\BS^1)$. In other words, $\SL$ is surjective. The solvability of \eqref{pde_linear_mix} when $f \in \CC^{0, \af}([0, 1] \times \BS^1)$, $g \in \CC^{1, \af}(\BS^1)$, $\vph \in \CC^{2, \af}(\BS^1)$ therefore follows.
\end{proof}

Now we are able to prove Theorem \ref{thm_lin}.

\begin{proof}[Proof of Theorem \ref{thm_lin}]
    Define $v_1 \coloneqq u_1 + u_2 + u_3$, $v_2 \coloneqq u_2 - u_3$ and $v_3 \coloneqq u_1 - \frac{1}{2} (u_2 + u_3)$. Then $v_1$ satisfies \eqref{pde_linear_Diri} with $f = F_1 + F_2 + F_3$ and $\vph = \vph_1 + \vph_2 + \vph_3$; $v_2$ satisfies \eqref{pde_linear_mix} with $f = F_1 - F_2$, $g = G_1$ and $\vph = \vph_2 - \vph_3$; $v_3$ satisfies \eqref{pde_linear_mix} with $f = F_1 - \frac{1}{2} (F_2 + F_3)$, $g = G_2$ and $\vph = \vph_1 - \frac{1}{2}(\vph_2 + \vph_3)$. It follows from Proposition \ref{prop_lin_Diri} and Proposition \ref{prop_lin_mix} that $v_1$, $v_2$ and $v_3$ are uniquely determined. Note that \begin{equation} \label{eq_u_v}
        u_1 = \frac{1}{3}(v_1 + 2 v_3), \quad u_2 = \frac{1}{3}(v_1 - v_3) + \frac{1}{2} v_2, \quad u_3 = \frac{1}{3}(v_1 - v_3) - \frac{1}{2} v_2.
    \end{equation} This gives the unique solution $\underline{u}_0 = (u_1, u_2, u_3) \in \underline{\CC}^{2, \af}([0, 1] \times \BS^1)$ to \eqref{pde_linear}. Furthermore, \eqref{eq_u_v}, \eqref{Schauder_Diri} and \eqref{Schauder_mix} lead to $$\begin{aligned}
        \|\underline{u}_0\|_{2, \af} &= \|u_1\|_{2, \af} + \|u_2\|_{2, \af} + \|u_3\|_{2, \af} \\
        &\leq C(\|v_1\|_{2, \af} + \|v_2\|_{2, \af} + \|v_3\|_{2, \af})\\
        &\leq C(\|F_1 + F_2 + F_3\|_{0, \af} + \|\vph_1 + \vph_2 + \vph_3\|_{2, \af} \\
        &\qquad + \|F_1 - F_2\|_{0, \af} + \|G_1\|_{1, \af} + \|\vph_2 - \vph_3\|_{2, \af} \\
        &\qquad + \|F_1 - \frac{1}{2} (F_2 + F_3)\|_{0, \af} + \|G_2\|_{1, \af} + \|\vph_1 - \frac{1}{2}(\vph_2 + \vph_3)\|_{2, \af})\\
        &\leq C(\|F_1\|_{0, \af} + \|F_2\|_{0, \af} + \|F_3\|_{0, \af} + \|G_1\|_{1, \af} + \|G_2\|_{1, \af} + \|\vph_1\|_{2, \af} + \|\vph_2\|_{2, \af} + \|\vph_3\|_{2, \af}) \\
        &= C(\|\underline{F}\|_{0, \af} + \|\underline{G}\|_{1, \af} + \|\underline{\vph}\|_{2, \af}),
    \end{aligned}$$ where $C$ is a constant that may varies from line to line but still remains independent of $u_i$, $v_i$, $F_i$, $G_i$ and $\vph_i$. This is exactly \eqref{Schauder}, which finishes the proof.
\end{proof}

\section{\textbf{(\ref{pde}) with Dirichlet boundary condition and the proof of Theorem \ref{thm_main}}}

We impose the Dirichlet boundary condition $\underline{u} = \underline{\vph}$ on $\{1\} \times \BS^1$ to \eqref{pde}.

\begin{thm}
    Let $\underline{F}, \underline{G}$ be as in Proposition \ref{prop_evo}. Then there exists $\Tilde{r} \in (0, 1]$ satisfying the following: given any $r \in (0, \Tilde{r})$, there exists $\vep(r) > 0$ such that for any $\underline{\vph} \in \underline{\CC}^{2, \af}(\BS^1)$ with $\|\underline{\vph}\|_{2, \af} < \vep$, the system \begin{equation} \label{pde_Diri}
        \left\{\begin{aligned} 
            \Dt \underline{u} &= \underline{F}(\underline{u}) \text{ in } (0, 1) \times \BS^1 \\
            \SB \underline{u} &= \underline{G}(\underline{u}) \text{ on } \{0\} \times \BS^1 \\
            \underline{u} &= \underline{\vph} \text{ on } \{1\} \times \BS^1
        \end{aligned}\right.
    \end{equation} has a solution $\underline{u}_0$ with $\|\underline{u}_0\|_{2, \af} < r$.
\end{thm}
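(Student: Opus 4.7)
\medskip

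The plan is to reformulate \eqref{pde_Diri} as a fixed-point problem and apply the contraction mapping theorem, using Theorem \ref{thm_lin} to invert the linear part and the structural bounds \eqref{str_F}, \eqref{str_G} to gain smallness from the quadratic nature of the nonlinearity. Concretely, I would define, for each $\underline{\vph} \in \underline{\CC}^{2,\af}(\BS^1)$, a map
\[
T = T_{\underline{\vph}} : \overline{B_r} \to \underline{\CC}^{2,\af}([0,1]\times\BS^1), \qquad \overline{B_r} \coloneqq \{\underline{u} \in \underline{\CC}^{2,\af} : \|\underline{u}\|_{2,\af} \le r\},
\]
by letting $T(\underline{u})$ be the unique solution furnished by Theorem \ref{thm_lin} to the linear system with data $\underline{F}(\underline{u})$, $(0,\underline{G}(\underline{u}))$, and $\underline{\vph}$. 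A fixed point of $T$ is then exactly a solution to \eqref{pde_Diri}.

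Next I would verify that $T$ maps $\overline{B_r}$ into itself for $r$ and $\vep$ small enough. By Theorem \ref{thm_lin},
\[
\|T(\underline{u})\|_{2,\af} \le C_0\bigl(\|\underline{F}(\underline{u})\|_{0,\af} + \|\underline{G}(\underline{u})\|_{1,\af} + \|\underline{\vph}\|_{2,\af}\bigr).
\]
The structural inequalities \eqref{str_F} imply, via the product rule for $\CC^{0,\af}$ and the fact that evaluation at $\{0\}\times\BS^1$ is a bounded restriction, that $F_{i,1}$ and $F_{i,2}$ are $\CC^{0,\af}$-small of order $\|\underline{u}\|_{2,\af}$ while $F_{i,3}$ is $\CC^{0,\af}$-small of order $\|\underline{u}\|_{2,\af}^{2}$; combined with the $\CC^{0,\af}$-bound on $D^{2}\underline{u}$ this gives $\|\underline{F}(\underline{u})\|_{0,\af} \le C_1 \|\underline{u}\|_{2,\af}^{2}$. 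Analogously \eqref{str_G}, together with the fact that $D\underline{u}|_{\{0\}\times\BS^1} \in \underline{\CC}^{1,\af}(\BS^1)$, yields $\|\underline{G}(\underline{u})\|_{1,\af} \le C_2 \|\underline{u}\|_{2,\af}^{2}$. Hence
\[
\|T(\underline{u})\|_{2,\af} \le C_3\bigl(\|\underline{u}\|_{2,\af}^{2} + \|\underline{\vph}\|_{2,\af}\bigr) \le C_3 r^{2} + C_3 \vep.
\]
Choosing $\tilde{r} \coloneqq \min\{1, 1/(4C_3)\}$ and, for $r \in (0,\tilde{r})$, setting $\vep(r) \coloneqq r/(4 C_3)$, one gets $\|T(\underline{u})\|_{2,\af} \le r/2 < r$; here one also must insist $r < \dt/10$ so the image lies in the regime where the $\CC^{2,\af}$-perturbation construction from Section~2 is valid.

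For the contraction property, I would bound $\underline{F}(\underline{u})-\underline{F}(\underline{\tilde{u}})$ and $\underline{G}(\underline{u})-\underline{G}(\underline{\tilde{u}})$ by interpolating between $\underline{u}$ and $\underline{\tilde{u}}$ and applying the derivative bounds in \eqref{str_F}, \eqref{str_G}. Schematically, writing $F_i(\underline{u})-F_i(\underline{\tilde{u}}) = \int_0^1 \frac{d}{dt} F_i(t\underline{u}+(1-t)\underline{\tilde u})\,dt$ and using that $|DF_{i,j}| \le C$ while $|F_{i,1}|,|F_{i,2}|,|DF_{i,3}| \le C(|b|+|c|+|d|)$, one obtains
\[
\|\underline{F}(\underline{u}) - \underline{F}(\underline{\tilde u})\|_{0,\af} + \|\underline{G}(\underline{u}) - \underline{G}(\underline{\tilde u})\|_{1,\af} \le C_4\, r\, \|\underline{u}-\underline{\tilde u}\|_{2,\af},
\]
so by Theorem \ref{thm_lin}, $\|T(\underline{u})-T(\underline{\tilde u})\|_{2,\af} \le C_0 C_4\, r\, \|\underline{u}-\underline{\tilde u}\|_{2,\af}$. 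Shrinking $\tilde{r}$ further so that $C_0 C_4 \tilde{r} \le 1/2$ makes $T$ a strict contraction on $\overline{B_r}$. The Banach fixed-point theorem then produces the desired solution $\underline{u}_0$ with $\|\underline{u}_0\|_{2,\af} < r$.

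The main obstacle I anticipate is a careful bookkeeping of the $\CC^{0,\af}$ and $\CC^{1,\af}$ norms of the compositions $F_{i,j}(x,\underline{u}|_{(0,\cdot)},D_y\underline{u}|_{(0,\cdot)},Du_i|_{(x,\cdot)})$ and $G_i(D\underline{u}|_{(0,\cdot)})$: the dependence on values at two different slices and on derivatives of mixed order means one must repeatedly use the Banach-algebra structure of $\CC^{k,\af}$, combined with the structural decay in \eqref{str_F}--\eqref{str_G} that the $F_{i,j}$ and $G_i$ (and their derivatives) vanish to the correct order at the origin. Provided these compositional estimates are carried out cleanly, the quadratic-in-$\underline{u}$ gain is precisely what allows $T$ to contract for $r$ small.
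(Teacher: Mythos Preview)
Your proposal is correct and follows essentially the same strategy as the paper: define the solution map via Theorem \ref{thm_lin}, use the quadratic structural bounds \eqref{str_F}--\eqref{str_G} together with the Schauder estimate \eqref{Schauder} to show it maps a small ball to itself and is a contraction, then invoke the Banach fixed-point theorem. One small remark: the condition $r < \dt/10$ is not needed for this theorem itself---the paper imposes it only afterwards, in deducing Theorem \ref{thm_main} from this result---so you may drop that constraint here.
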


\begin{proof}
    Define $\SA: \underline{\CC}^{2, \af}([0, 1] \times \BS^1) \rightarrow \underline{\CC}^{2, \af}([0, 1] \times \BS^1)$ by setting $\SA(\underline{u}) = \underline{w}$ as the solution to \begin{equation}
        \left\{\begin{aligned} 
            \Dt \underline{w} &= \underline{F}(\underline{u}) \text{ in } (0, 1) \times \BS^1 \\
            \SB \underline{w} &= \underline{G}(\underline{u}) \text{ on } \{0\} \times \BS^1 \\
            \underline{w} &= \underline{\vph} \text{ on } \{1\} \times \BS^1.
        \end{aligned}\right.
    \end{equation} The well-definedness of $\SA$ follows from \eqref{str_F&G} and Theorem \ref{thm_lin}. We claim that there exist $r \in (0, 1)$ and $\vep > 0$ such that when $\|\underline{\vph}\|_{2, \af} < \vep$, $\SA\big|_{B_r}: B_r \rightarrow B_r$ is a contraction mapping, where $B_r$ is the ball of radius $r$ centered at $0$ in $\underline{\CC}^{2, \af}([0, 1] \times \BS^1)$.

    Suppose that $\underline{u}, \underline{v} \in B_r \subset \underline{\CC}^{2, \af}([0, 1] \times \BS^1)$ with $r < 1$ and $\|\underline{\vph}\|_{2, \af} < \vep$. Then by the structural conditions \eqref{str_F}, \eqref{str_G} and the Schauder estimate \eqref{Schauder}, we have $$\|\SA(\underline{u})\|_{2, \af} \leq C(\|\underline{F}(\underline{u})\|_{0, \af} + \|\underline{G}(\underline{u})\|_{1, \af} + \|\underline{\vph}\|_{2, \af}) \leq C_1 (\|\underline{u}\|_{2, \af}^2 + \|\underline{\vph}\|_{2, \af}) \leq C_1(r^2 + \vep),$$ where $C_1$ is a constant independent of $r$ and $\vep$. On the other hand, $\SA(\underline{u}) - \SA(\underline{v})$ is the solution to \begin{equation}
        \left\{\begin{aligned} 
            \Dt \underline{w} &= \underline{F}(\underline{u}) - \underline{F}(\underline{v}) \text{ in } (0, 1) \times \BS^1 \\
            \SB \underline{w} &= \underline{G}(\underline{u}) - \underline{G}(\underline{v}) \text{ on } \{0\} \times \BS^1 \\
            \underline{w} &= 0 \text{ on } \{1\} \times \BS^1.
            \end{aligned}\right.
    \end{equation} Therefore, by \eqref{str_F}, \eqref{str_G} and \eqref{Schauder} again, we see $$\begin{aligned}
        \|\SA(\underline{u}) - \SA(\underline{v})\|_{2, \af} &\leq C(\|\underline{F}(\underline{u}) - \underline{F}(\underline{v})\|_{0, \af} + \|\underline{G}(\underline{u}) - \underline{G}(\underline{v})\|_{1, \af}) \\
        &\leq C_2 \|\underline{u} - \underline{v}\|_{2, \af} (\|\underline{u}\|_{2, \af} + \|\underline{v}\|_{2, \af}) \\
        &\leq 2 r C_2 \|\underline{u} - \underline{v}\|_{2, \af},
    \end{aligned}$$ where $C_2$ is a constant independent of $r$ and $\vep$. Consequently, for any $r < \Tilde{r} \coloneqq \min\{\frac{1}{C_1}, \frac{1}{4C_2}, 1\}$ and $\vep < r (\frac{1}{C_1} - r)$, $$\left\{\begin{aligned}
        \|\SA(\underline{u})\|_{2, \af} &< r \\
        \|\SA(\underline{u}) - \SA(\underline{v})\|_{2, \af} &\leq \frac{1}{2} \|\underline{u} - \underline{v}\|_{2, \af}
    \end{aligned}\right.$$ holds whenever $\underline{u}, \underline{v} \in B_r$ and $\|\underline{\vph}\|_{2, \af} < \vep$. In other words, $\SA\big|_{B_r}: B_r \rightarrow B_r$ is a contraction mapping when $\|\underline{\vph}\|_{2, \af} < \vep$.

    Finally, by the Banach fixed-point theorem, there exists $\underline{u}_0 \in B_r \subset \underline{\CC}^{2, \af}([0, 1] \times \BS^1)$ such that $\SA(\underline{u}_0) = \underline{u}_0$, and hence $\underline{u}_0$ is a solution to \eqref{pde_Diri} with $\|\underline{u}_0\|_{2, \af} < r$. 
\end{proof}

Theorem \ref{thm_main} is now a corollary of Theorem \ref{pde_Diri}.

\begin{proof}[Proof of Theorem \ref{thm_main}]
    Let $r < \min\{\Tilde{r}, \frac{\dt}{10}\}$, where $\Tilde{r}$ is as in Theorem \ref{pde_Diri}. Then Theorem \ref{pde_Diri} guarantees the existence of $\vep(r) > 0$ and a solution $\underline{u}$ to \eqref{pde_Diri} with $\|\underline{u}\|_{2, \af} < r < \frac{\dt}{10}$ whenever $\|\underline{\vph}\|_{2, \af} < \vep$. Finally, Lemma \ref{lem_C^0} and Proposition \ref{prop_evo} show that $\underline{u}$ gives the desired $\CC^{2, \af}$-perturbation.
\end{proof}


\bibliographystyle{abbrv}
\bibliography{references}

\end{sloppypar}
\end{document}